\documentclass[twoside, 11pt]{article}
\usepackage[noadjust]{cite}
\usepackage{booktabs}
\usepackage{mathrsfs}
\usepackage{amssymb, amsmath, mathrsfs, amsthm}
\usepackage{graphicx}
\usepackage{color}
\usepackage[top=3cm, bottom=2cm, left=2cm, right=2cm]{geometry}
\usepackage{float, caption}
\usepackage{cite}
\usepackage{tikz}
\usepackage{subfigure}
\usepackage{enumerate}
\usepackage[colorlinks,
linkcolor=red,
anchorcolor=blue,
citecolor=green
]{hyperref}
\usepackage{diagbox}
\input{epsf}

\newcommand{\bd}{\begin{description}}
\newcommand{\ed}{\end{description}}
\newcommand{\bi}{\begin{itemize}}
\newcommand{\ei}{\end{itemize}}
\newcommand{\be}{\begin{enumerate}}
\newcommand{\ee}{\end{enumerate}}
\newcommand{\beq}{\begin{equation}}
\newcommand{\eeq}{\end{equation}}
\newcommand{\beqs}{\begin{eqnarray*}}
\newcommand{\eeqs}{\end{eqnarray*}}

\definecolor{DarkGreen}{rgb}{0.2, 0.6, 0.3}

\newtheorem{theorem}{Theorem}[section]

\newtheorem{definition}[theorem]{Definition}
\newtheorem{corollary}[theorem]{Corollary}
\newtheorem{observation}[theorem]{Observation}

\newtheorem{operation}[theorem]{Operation}
\newtheorem{case}{Case}

\newtheorem{remark}{Remark}[section]

\makeatletter
\newenvironment{breakablealgorithm}
{
	\begin{center}
		\refstepcounter{algorithm}
		\hrule height.8pt depth0pt \kern2pt
		\renewcommand{\caption}[2][\relax]{
			{\raggedright\textbf{\ALG@name~\thealgorithm} ##2\par}%
			\ifx\relax##1\relax 
			\addcontentsline{loa}{algorithm}{\protect\numberline{\thealgorithm}##2}%
			\else 
			\addcontentsline{loa}{algorithm}{\protect\numberline{\thealgorithm}##1}%
			\fi
			\kern2pt\hrule\kern2pt
		}
	}{
	\kern2pt\hrule\relax
\end{center}
}
\makeatother
\usepackage{algorithm}
\usepackage{algorithmic}

\begin{document}
\title{\textbf{A note on the precoloring extension of
outerplane graphs}\thanks{
 \it $^c$Corresponding author. xcdeng@mail.tjnu.edu.cn}}
\author{ Xingchao Deng$^{a,b,c}$, Beiyan Zou$^a$, Hong Zhai$^a$ \\
	\footnotesize $^{a}$  College of Mathematical Science, Tianjin  Normal University\\
	\footnotesize Tianjin , 300387, P. R. China\\
        \footnotesize $^{b}$ Institute of Mathematics and Interdisciplinary Sciences, Tianjin Normal University\\
      \footnotesize Tianjin , 300387, P. R. China }
\date{}
\maketitle
\begin{abstract}

  The \emph{precoloring extension problem} involves initially assigning colors to certain vertices and determining whether such a partial coloring can be extended to a \emph{proper k-coloring} of the entire graph for an integer $k$. In a landmark  result, Gr\"{o}tzsch demonstrated that every planar graph without triangle is 3-colorable. A significant generalization of this theorem was recently achieved by La et al. \cite{LaLuSt}. In this paper, we establish that any precoloring of three independent vertices (or two independent vertices) can be extended to a 3-coloring of the entire graph when it is a connected outerplane graph containing at most one triangle (or two triangles, respectively). \\[2mm]

	{\bf Keywords:} Gr\"{o}tzsch's Theorem; precoloring extension; homomorphism; 3-coloring; triangle\\[2mm]

    {\bf AMS subject classification 2020:} 05C10; 05C15; 05C75.
\end{abstract}
\section{Introduction}
A graph $G$ is termed a plane graph if it is embedded in the plane without edge crossings. A graph is planar if and only if it does not contain
$K_5$  or $K_{3,3}$ minor, according to Kuratowski's theorem. A Hamilton cycle is defined as a cycle in a graph that passes through every vertex exactly once.
A graph is outerplanar if it can be embedded in the plane such that all its vertices are on the outer face’s boundary. In such an embedding, denoted as H-embedding, the outer face forms a Hamiltonian cycle  $H_G=(x_1,x_2,\ldots,x_n)$, and all other faces are inner faces. Moreover, such an embedding is called an outerplane graph.
  A graph is outerplanar if and only if it contains neither $K_4$ nor $K_{2,3}$ minor.

  In 1958, Gr\"{o}tzsch et al. {\cite{Grotzsch1958}} established the renowned G\"{o}rtzsch's Theorem, which asserts that every triangle-free planar graph is 3-colorable. Subsequently, Gr\"{u}nbaum \cite{BrankoGrunbaum1963} and Aksionov \cite{Aksionov1974} extended this result by proving that planar graphs containing at most three triangles also admit 3-colorings. Their work further elucidated the extendability of colorings for 4- and 5-faces, marking a pivotal advancement in the field. This progress shifted research focus toward investigating 3-colorable planar graphs with triangular structures.

     La et al.\cite{LaLuSt} studied the 3-coloring extension under the constraint of planar graphs with at most one triangle and obtained the following results:  Theorem \ref{Hong1} and Theorem \ref{Hong2}.
\begin{theorem}[\cite{LaLuSt}]\label{Hong1}For any two independent vertices in a plane graph $G,$ any precoloring assigned to them  can be extended to a proper 3-coloring of the entire graph $G$.
\end{theorem}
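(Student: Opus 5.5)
Read literally, the statement is false: $K_4$ is a plane graph, and giving two of its vertices different colours cannot be extended. I will therefore prove it under the hypothesis used in \cite{LaLuSt}, namely that $G$ is a plane graph with \emph{at most one triangle}. The plan is a minimal-counterexample argument. Its only external input is the Grünbaum--Aksionov theorem recalled above: every planar graph with at most three triangles is $3$-colourable, and in a triangle-free plane graph a precolouring of a $4$- or $5$-face extends.

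\textbf{Setup and routine reductions.} Let $(G,u,v,\varphi)$ be a counterexample with $|V(G)|+|E(G)|$ minimum, where $u,v$ are non-adjacent and $\varphi$ is the precolouring of $\{u,v\}$. We may assume $G$ is connected, since otherwise we colour the components separately. We may assume every vertex other than $u,v$ has degree at least $3$: a vertex of degree at most $2$ can be deleted and, after colouring the rest, receives one of the three colours missing from its neighbours. We may assume $G$ has no separating cycle of length at most $5$ whose interior avoids $u,v$. Such a cycle $C$ bounds an interior containing no triangle other than possibly one on $C$ itself; we colour the exterior by minimality and then extend inwards using the Grünbaum--Aksionov extension for short faces. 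The case where the single triangle lies inside $C$ needs separate care, handled by colouring the inside first and using that a triangle receives all three colours. Similarly, we may assume no $2$-cut separates $u$ from $v$ in a way that allows recolouring one side by permuting colours.

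\textbf{Main step: encode the precolouring by a gadget.} After these reductions, I reduce to a graph with at most three triangles and no precolouring, and apply Grünbaum--Aksionov directly.
\begin{itemize}
\item If $\varphi(u)\neq\varphi(v)$ and $u,v$ lie on a common face $f$, add the edge $uv$ inside $f$. Every triangle created has the form $uwv$ with $w$ a common neighbour. The reductions (no separating $4$-cycles $u w v w'$) leave at most two such $w$, so at most $1+2=3$ triangles arise. A $3$-colouring of $G+uv$, with colours permuted, extends $\varphi$.
\item If $\varphi(u)=\varphi(v)$, instead add two new vertices $a,b$ inside $f$, each adjacent to both $u$ and $v$, with $ab$ an edge. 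This forces $u$ and $v$ to the same colour and creates at most two new triangles, $uab$ and $vab$.
\end{itemize}

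\textbf{Expected obstacle.} The hard case is when $u$ and $v$ share no face. Here I would take a shortest $u$--$v$ path $P$ in the dual sense, i.e.\ a sequence of faces. I would then argue, via minimality, that all short separating cycles crossing $P$ are reducible, so that $G$ can be cut along $P$ and re-glued to make $u,v$ cofacial without creating triangles. Alternatively, I would identify $u$ with a vertex at distance $2$ and induct, checking that the identification creates no new triangle because there are no $5$-cycles through $u$ and $v$ of the relevant type. Controlling the triangle count in this step is where I expect the real difficulty. If the surgery fails, the fallback is a full discharging argument on the minimal counterexample, using the reducible configurations above.
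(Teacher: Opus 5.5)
The paper gives no proof of this statement (it is quoted from \cite{LaLuSt}), so your argument has to stand on its own. It only covers the case where $u$ and $v$ lie on a common face.

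That case is handled correctly. Adding $uv$ is legitimate once separating $4$-cycles $uwvw'$ are removed via the $4$-face extension of Theorem~\ref{Grum and Ak}(ii), and the diamond gadget is fine. It is even easier than you make it: inserting new vertices $a,b,c$ that form a $5$-face $u\,a\,v\,b\,c$ inside the common face creates no triangle. Precolouring that face so that its special edge is incident to a new vertex then lets you apply Theorem~\ref{Grum and Ak}(ii) with no reductions at all, as in Theorems~\ref{3.1} and~\ref{3.3}.

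The case you defer as the ``expected obstacle'' is the entire content of the theorem, and what you write there is not an argument. For $\varphi(u)\neq\varphi(v)$ the claim is equivalent to $3$-colourability of $G+uv$. For $\varphi(u)=\varphi(v)$ it is equivalent to $3$-colourability of the graph obtained by identifying $u$ and $v$. For non-cofacial $u,v$ these graphs are in general non-planar, so no planar gadget brings you back to the Gr\"{u}nbaum--Aksionov setting. Cutting along a dual path and re-gluing changes the graph, and nothing ensures that the pairs $(\psi(u),\psi(v))$ realised by $3$-colourings survive, so it is not a reduction.

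Identifying $u$ with a vertex $x$ at distance $2$ would be a valid reduction only once you show that a suitable $x$ exists. Every $5$-cycle through $u$ and $x$ becomes a triangle, so you need none of these beyond keeping the total at one triangle. When $\varphi(u)=\varphi(v)$ you also need $x\notin N(v)$. Proving that such an $x$ exists in a minimal counterexample is the real work, and you do not attempt it.

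Moreover, your device for eliminating short separating cycles fails exactly where it would be needed. For a separating $5$-cycle whose interior contains the triangle, ``colour the inside first'' is not licensed by the induction hypothesis: you would have to extend a precoloured $5$-cycle together with $u,v$ to the outside. Extending inward can also genuinely fail; take a vertex adjacent to both ends of the special edge and to the special vertex.

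What is missing is a method that tolerates non-planar augmentations. One example is the Kostochka--Yancey bound $|E|\ge(5|V|-2)/3$ for $4$-critical graphs, which underlies the known short proofs of Gr\"{o}tzsch-type results for planar graphs with an added edge or vertex. Your closing ``fallback discharging'' is only a placeholder for such an argument.

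Finally, your counterexample to the literal statement is invalid, because $K_4$ has no two independent vertices. Use instead:
\begin{itemize}
\item the paper's diamond $D$, whose two degree-$2$ vertices are independent and forced to share a colour;
\item the octahedron;
\item a $5$-cycle plus a vertex joined to all of it.
\end{itemize}
Restricting to at most one triangle is the right reading, and it is also how the paper actually applies the theorem.
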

\begin{theorem}[\cite{LaLuSt}]\label{Hong3}If $G$ is a plane graph with at most one triangle and $v$ is a vertex of degree no more than 3 in a graph $Q$ such that $G=Q-v$. When $G$ contains at most one triangle and $v$ is adjacent to at most two of its vertices. Then $Q$ is 3-colorable.
\end{theorem}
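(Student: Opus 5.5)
The plan is to reduce everything to extending a suitable precoloring of $G$, and then to color $v$ last. I read the hypothesis as follows: $G=Q-v$ is a plane graph with at most one triangle, $v$ has degree at most $3$ in $Q$, and $v$ is adjacent to at most two vertices of the triangle of $G$ (if that triangle exists). I also take Theorem \ref{Hong1} in its intended form from \cite{LaLuSt}, namely for plane graphs with at most one triangle. Under these readings $G$ satisfies the hypothesis of Theorem \ref{Hong1}. The graph $Q$ itself need not be planar, so we never color $Q$ directly. We only need a proper $3$-coloring of $G$ in which the neighbours of $v$ receive at most two distinct colors; $v$ then takes the third color.

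\emph{Case $d_Q(v)\le 2$.} Any proper $3$-coloring of $G$ suffices. Such a coloring exists, for instance by the Gr\"{u}nbaum--Aksionov theorem (at most three triangles), or by Theorem \ref{Hong1} applied to any precoloring of two nonadjacent vertices. If $G$ is complete it has at most three vertices and is trivially $3$-colorable. In either event the at most two neighbours of $v$ use at most two colors, so $v$ can be colored.

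\emph{Case $d_Q(v)=3$,} with $N_Q(v)=\{a,b,c\}$. The key step is to show that some two of $a,b,c$ are nonadjacent in $G$. Suppose instead that $a,b,c$ are pairwise adjacent. Then $abc$ is a triangle of $G$, hence the unique triangle of $G$, and $v$ is adjacent to all three of its vertices. This contradicts the hypothesis. (It is also exactly the obstruction: $Q$ would contain $K_4$.) So we may assume, say, $ab\notin E(G)$. Precolor $a$ and $b$ with the same color $1$ and apply Theorem \ref{Hong1} to obtain a proper $3$-coloring $\varphi$ of $G$ with $\varphi(a)=\varphi(b)=1$. Then $\{\varphi(a),\varphi(b),\varphi(c)\}$ has at most two elements, and setting $\varphi(v)$ to a remaining color gives a proper $3$-coloring of $Q$.

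The argument is short once Theorem \ref{Hong1} is available, so the main point of care is the hypotheses. We must check that $G$ really satisfies the assumptions of Theorem \ref{Hong1}, namely planarity and at most one triangle; these are given. We must also confirm that the condition on $v$ excludes precisely the case in which all three neighbours of $v$ are mutually adjacent. If Theorem \ref{Hong1} were only available for triangle-free graphs, I would instead try to handle the single triangle directly. One option is to identify $a$ and $b$, which keeps the graph planar because both are neighbours of the deleted vertex $v$ in a common region, and then count the triangles created. This is where I would expect the real difficulty to lie.
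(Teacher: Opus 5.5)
Your proof is correct. The paper gives no proof of this statement; it is only quoted from \cite{LaLuSt}. Your argument derives it as an immediate corollary of the two-vertex extension theorem. If the three neighbours of $v$ were pairwise adjacent, they would form the unique triangle of $G$, which contradicts the hypothesis. So two of them are nonadjacent, and colouring these two alike and extending leaves a free colour for $v$.

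You were right to insist on the hypothesis ``at most one triangle'' in Theorem~\ref{Hong1}. As printed in the paper that theorem is false: attach a pendant vertex to $K_4$. Your argument therefore needs the version from \cite{LaLuSt}.

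Discard your closing aside about identifying $a$ and $b$. Since $Q$ need not be planar, $a$ and $b$ need not share a face of $G$, so identifying them need not preserve planarity. Nothing in your proof depends on it.
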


A graph $G$ is $K_4^{'}$-free if it contains no subdivision of $K_4$ where exactly three edges incident to one vertex are subdivided, and the central vertex is degree 3 in $G$. For a proper 3-coloring of $K_4^{'}$ (See Feagure \ref{K4^{'}}), $v’s$ three neighbors cannot share the same color, as the subdivided edges enforce non-monochromatic on the neighbors of $v$ constraints on the peripheral vertices.

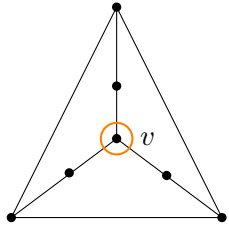
\begin{figure}
	\centering
	\begin{minipage}[htpb]{0.2\linewidth}
		\begin{tikzpicture}[scale=0.7]
			\fill (0,0) circle(2.5pt);
			\node[right=5pt] at (0,0) {$v$};
			\fill (0,1) circle (2.5pt);
			\node at (0,1) {};
			\fill (0,2.5) circle (2.5pt);
			\node at (0,2.5) {};
			\fill (-2,-1.5) circle (2.5pt);
			\node at (-2,-1.5) {};
			\fill (2,-1.5) circle (2.5pt);
			\node at (2,-1.5) {};
			\draw (0,2.5)--(0,0)--(-2,-1.5)--(2,-1.5)--(0,0) node (v1) {};
			\draw (-2,-1.5)--(0,2.5)--(2,-1.5);
			\fill (-0.9,-0.65) circle (2.5pt);
			\node at (-0.9,-0.65) {};
			\fill (0.95,-0.7) circle (2.5pt);
			\node at (0.95,-0.7) {};
			\draw[thick,orange]  (v1) ellipse (0.3 and 0.3);
		\end{tikzpicture}
	\end{minipage}
	\caption{The graph of $K_4^{'}$.}
	\label{K4^{'}}
\end{figure}
\begin{theorem}[\cite{LaLuSt}]\label{Hong2}
	If a $K_4^{'}$-free plane graph $G$ contains no more than one triangle, a precoloring with the same color of the neighbors of every vertex of degree at most 3  can be extended to a 3-coloring of $G$.
\end{theorem}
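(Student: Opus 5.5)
We plan to prove Theorem~\ref{Hong2} by reducing it to the Grünbaum--Aksionov theorem: a planar graph with at most three triangles is $3$-colorable. The reduction identifies the neighbours of a low-degree vertex. Let $v$ be a vertex with $d(v)\le 3$ and neighbours $u_1,\dots,u_{d(v)}$. Suppose some colouring gives all $u_i$ the same colour $c$. Then $v$ can always take a colour different from $c$, and the rest of $G$ only has to be coloured so that the $u_i$ agree. So we form $G_v$ from $G-v$ by identifying $u_1,\dots,u_{d(v)}$ into a single vertex $w$. Any $3$-colouring of $G_v$ lifts to a $3$-colouring of $G$ in which the neighbours of $v$ are monochromatic. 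The whole task is therefore to show that $G_v$ is a loopless plane graph with few triangles.

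\textbf{Step 1 (planarity and looplessness).} Since $v$ and its neighbours lie on a common set of faces around $v$, we contract the edges $vu_1,\dots,vu_{d(v)}$ one at a time inside the embedding. This realises $G_v$ as a plane graph, after deleting multiple edges. A loop would appear exactly when some $u_iu_j\in E(G)$, that is, when $v$ lies on a triangle. If $G$ has no triangle through $v$, $G_v$ is loopless. If the unique triangle of $G$ passes through $v$, then the neighbours of $v$ cannot all share a colour. We read the hypothesis as only concerning vertices whose neighbourhood is independent, and we note explicitly that this case is excluded.

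\textbf{Step 2 (counting triangles of $G_v$).} A triangle of $G_v$ either is a triangle of $G-v$ (at most one), or passes through $w$. A triangle $wxy$ corresponds to a path $u_i x y u_j$ in $G-v$. If $i\ne j$, this path together with $v$ forms a $5$-cycle $v u_i x y u_j$. If $i=j$, $u_ixy$ is already a triangle of $G$. The main obstacle is to show that at most two new triangles arise, so that $G_v$ has at most three triangles in total. This is where $K_4'$-freeness enters.

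Suppose $d(v)=3$ and three such $5$-cycles occur, one through each pair $u_iu_j$. In the plane, together with the three edges at $v$ they form a subdivision of $K_4$ centred at $v$ in which the three edges at $v$ stay unsubdivided while the opposite paths carry the subdivisions. We would first try to rearrange this, using planarity and the fact that the three cycles bound nested regions around $v$, into exactly the forbidden configuration $K_4'$ with $v$ of degree $3$. This would leave at most two new triangles. For $d(v)\le 2$ there is only one pair of neighbours, so the count is immediate.

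\textbf{Step 3 (conclusion and several vertices).} Apply the Grünbaum--Aksionov theorem to $G_v$ and lift the colouring. If the statement is meant for all low-degree vertices simultaneously, we proceed by induction. Pick a minimal counterexample, identify the neighbourhood of one vertex $v$ with $d(v)\le3$, and check that $G_v$ is still $K_4'$-free with at most one triangle. If it is not, we instead delete $v$ and appeal to Theorem~\ref{Hong3} to recolour $v$. I expect the $K_4'$ bookkeeping in Step 2 to be the delicate point.
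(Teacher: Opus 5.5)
The paper does not prove this statement; it is quoted from \cite{LaLuSt}, so your argument has to stand on its own. Step~1 is fine, and you are right that $v$ must avoid the triangle: otherwise the statement already fails for $K_3$.

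\textbf{The gap is Step~2.} The bound you aim for is false, and $K_4'$-freeness cannot rescue it. Every edge $xy$ with $x\in N(u_i)\setminus\{v\}$, $y\in N(u_j)\setminus\{v\}$, $i\neq j$, yields its own triangle $wxy$ of $G_v$. Nothing bounds the number of such edges, not even for a single pair.
\begin{itemize}
\item Case $d(v)=2$: join $u_1,u_2$ by $k$ internally disjoint paths $u_1x_\ell y_\ell u_2$. Then $G$ is planar, triangle-free and $K_4'$-free, but $G_v$ is the windmill with $k$ triangles. So ``the count is immediate'' is wrong.
\item Case $d(v)=3$: take the star at $v$, add the rim paths $u_1x_{12}y_{12}u_2$, $u_2x_{23}y_{23}u_3$, $u_3x_{31}y_{31}u_1$, and attach a triangle far away in the outer face. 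Then $G$ is $K_4'$-free with one triangle, and every $5$-cycle through $v$ is a face. Yet $G_v$ has four triangles, so Gr\"{u}nbaum--Aksionov gives nothing.
\end{itemize}
The configuration you wanted to ``rearrange'' into $K_4'$ (spokes unsubdivided, rim subdivided) is not $K_4'$ at all, and it occurs in triangle-free graphs. In $K_4'$ the spokes are subdivided, and the middle edges of the three $5$-cycles through $v$ \emph{form} the triangle. That is exactly why $G_v$ then contains $K_4$.

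\textbf{What is missing} is control of the $5$-cycles through $v$ before any counting.
\begin{itemize}
\item For $d(v)\le 2$, the claim is just Theorem~\ref{Hong1} applied to the two neighbours of $v$.
\item For $d(v)=3$, one can work with a minimal counterexample. Suppose a $5$-cycle $C=vu_ixyu_j$ is separating. Colour $C$ together with the side containing the third neighbour $u_k$ by minimality. On $C$ this gives $u_i,u_j\mapsto 1$, $v\mapsto 2$, $\{x,y\}\mapsto\{2,3\}$. Hence the special edge is $vu_i$ or $vu_j$, which lies in no triangle, and Theorem~\ref{Grum and Ak}(ii) extends the colouring across $C$.
\item After this reduction, the only triangles of $G_v$ besides the image of the triangle of $G$ come from $5$-faces at $v$. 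So $G_v$ has at most four triangles.
\item The remaining case is three $5$-faces at $v$ plus the triangle, as in the example above. This is the only place $K_4'$ can occur, and it needs an argument beyond the Gr\"{u}nbaum--Aksionov count.
\end{itemize}

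\textbf{Step~3 should be dropped.} The simultaneous reading is false: in $C_5$ the five conditions force all vertices to share a colour. Moreover, Theorem~\ref{Hong3} only yields $3$-colourability of $G+v$, never a monochromatic neighbourhood.
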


In any 3-coloring of a 5-face, the color distribution follows a specific pattern: two vertices are assigned to one color, two to another, and the remaining vertex, the special vertex, receives the third color. The edge directly opposite this special vertex within the 5-face is designated as the special edge.

\begin{theorem}[\cite{Aksionov1974,BrankoGrunbaum1963}]\label{Grum and Ak}
	Let $G$ be a plane graph. The following results hold.
	\item [\rm(i)] If $G$ has at most three triangles, then the chromatic number of
    $G$ is at most three;
	\item [\rm(ii)] If $G$ has at most one triangle, then
	\begin{itemize}
		\item \indent
		if $G$ has a 4-face $C$, every 3-coloring   of $C$ can be extended to a 3-coloring of $G$;
		\item \indent
		if $G$ has a 5-face $F$, every 3-coloring   of $F$ can be extended to a 3-coloring of $G$ if the special edge of $F$ is not contained in a triangle.
	\end{itemize}
\end{theorem}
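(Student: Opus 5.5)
The statement bundles two classical results, the Grünbaum--Aksionov theorem (i) and the extension statements (ii). The plan is to prove both by induction on $|V(G)|$, taking a minimal counterexample $G$ and, throughout, using Grötzsch's Theorem as the base tool. For (ii), the first step is to reduce to the case where the prescribed face $C$ (a 4-face or a 5-face) is the outer face. This is done by re-embedding, which does not change the triangle count.

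Next we handle separating cycles. Suppose $G$ contains a separating cycle $C'$ of length at most 5 that has at most one triangle in its interior. Then we color the exterior of $C'$ together with $C'$ itself by induction. After that, we extend the coloring into the interior using the 4- or 5-face case of (ii) applied to the interior subgraph. For 5-cycles we must check that the special edge is not contained in a triangle, and we would argue this by parity.

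Once no such cycles exist, we analyse short faces. A 4-face $v_1v_2v_3v_4$ other than $C$ is handled by identifying $v_1$ with $v_3$. Because there are no separating short cycles, this identification creates no new triangle and keeps the graph planar, so induction applies and the coloring lifts back to $G$. A 5-face is handled similarly, by identifying two vertices at distance two on it.

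For (i), the procedure is as follows.
\begin{enumerate}
\item If $G$ has at most one triangle, delete an edge of that triangle and apply Grötzsch's Theorem; if the resulting coloring fails on the deleted edge, use the 4-face extension instead.
\item If $G$ has two or three triangles, pick a triangle $T$ and use that $T$ bounds a face after a suitable choice of disc, or else is a separating triangle.
\item In the separating case, color the exterior and the interior independently by induction, then permute colors so the two colorings agree on $T$.
\item In the facial case, contract or identify vertices to destroy one triangle, and use an Euler-formula discharging argument to find a vertex of degree at most 3 or a reducible short face.
\end{enumerate}
The triangles can be counted via Euler's formula: $\sum_f(|f|-4)+\sum_v(\deg v-4)=-8$. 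This forces a face of length at most 3, or a vertex of degree at most 3, near the precolored boundary.

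The main obstacle is the final case, where there are no reducible configurations and three triangles that are pairwise close. There, identifications may create new triangles. I would first try the standard route of Aksionov: show that in the minimal counterexample all triangles are facial and vertex-disjoint from $C$, then apply a discharging argument on the outer face of length at most 5, where the charge deficit $-8$ exceeds what three triangles can supply. I expect the verification that the identifications create no extra triangles to be the delicate part.
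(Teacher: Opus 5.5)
The paper gives no proof of this statement; it quotes it from Gr\"{u}nbaum and Aksionov. Your sketch therefore has to stand on its own, and at the decisive steps it does not.

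In (ii), the 5-face reduction is false as described. If $v_1v_2v_3v_4v_5$ is a face and you identify $v_1$ with $v_3$ into a vertex $w$, the path $v_3v_4v_5v_1$ closes up into the triangle $wv_4v_5$. The same happens for every nonadjacent pair on a 5-cycle, so this identification \emph{always} creates a triangle. When $G$ already contains its one triangle, the reduced graph has two, and the induction hypothesis no longer applies.

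The separating 5-cycle step has the same kind of hole. The coloring of $C'$ is whatever the exterior extension happens to produce, and no parity argument controls its special edge. Take $C'=p_1p_2p_3p_4p_5$ with the triangle $p_1p_2z$, where $z$ is inside $C'$ and also adjacent to $p_4$. Any exterior coloring in which $p_4$ is the special vertex makes $p_1p_2$ the special edge, and then $z$ sees all three colors. You would have to choose the exterior coloring, or the decomposition, deliberately, and the plan does not do this.

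In (i), step 4 is not an argument. Contracting an edge of a triangle merges adjacent vertices, so colorings do not lift back. Identifying nonadjacent vertices maps every triangle to a triangle, and it lowers the count only when it merges two triangles sharing an edge. The Euler identity $\sum_f(|f|-4)+\sum_v(\deg v-4)=-8$ is correct. However, it only yields vertices of degree at most $3$, with no control over where they lie. A degree-3 vertex is not reducible for 3-coloring, since every 4-critical graph has minimum degree at least $3$. So the charge $-8$ is simply absorbed by degree-3 vertices and no contradiction arises; three triangles \emph{add} negative charge rather than supplying any.

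Step 1 is also incomplete. After deleting an edge of the triangle, the merged face can have length $6$ or more, and then there is no extension theorem to invoke.

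What is missing is exactly what makes the theorem hard: either Aksionov's long case analysis, or an edge-density bound for 4-critical graphs. An example of the latter is the Kostochka--Yancey inequality $|E|\ge (5|V|-2)/3$, which Borodin, Kostochka, Lidick\'{y} and Yancey use for a short proof. This bound is incompatible with Euler's formula once 4-faces are excluded: at most three triangular faces and all other faces of length at least $5$ give $|E|\le (5|V|-4)/3$. The real work is then in excluding 4-faces without creating new triangles. As you acknowledge yourself, your plan leaves precisely this part open.
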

La et al.\ raised the three  {\bf Problems} 4.2, 4.3 and 4.4 at the end of the article~\cite{LaLuSt}.
Problem 4.4~\cite{LaLuSt} has been addressed in \cite{Aksionov1974,BrankoGrunbaum1963} for 5-face.  In this paper, we attempt to address the first two problems in biconnected outerplanar graphs and then extend the results to outerplanar graphs.

In this study, we propose a structure composed of two triangles sharing a common edge, which we designate as the ``$diamond\ D$'' (Figure \ref{daimond}). Within this configuration, two independent vertices u and v are constrained to share the same color in any proper 3-coloring. These vertices are termed ``diamond vertices." Notably, in Problem 4.2 ~\cite{LaLuSt}, the presence of the diamond structure enforces that $u$ and $v$ must adopt identical colors; otherwise, the graph admits no proper 3-coloring.

\begin{figure}
	\begin{minipage}[h]{0.3\linewidth}
		\begin{tikzpicture}[scale=0.5]
			\draw  (0,0) node (v1) {} circle (100pt);
			\fill (0,2) circle(2.5pt) ;
			\node [above=3pt] at (0,2) {$u$};
			\fill (-2,0) circle(2.5pt) ;
			\node [above=3pt] at (-2,0) {};
			\fill (2,0) circle(2.5pt) ;
			\node [above=3pt] at (2,0) {};
			\fill (0,-2) circle(2.5pt) ;
			\node [below=3pt] at (0,-2) {$v$};
			\draw (0,2)--(-2,0)--(2,0)--(0,2);
			\draw (0,-2)--(-2,0)--(2,0)--(0,-2);
			\draw[thick,densely dashed](-2.4,2.55)--(0.5,3.5);
			\node at (-2.4,2.55) {};
			\node at (0.5,3.5) {};
			\draw[thick,densely dashed](-3.05,1.65) --(1.6,3.05);
			\node at (-3.05,1.65) {};
			\node at (1.6,3.05) {};
			\draw[thick,densely dashed](-3.45,0.65) --(-0.4,1.6);
			\node at (-3.45,0.65) {};
			\node at (-0.4,1.6) {};
			\draw[thick,densely dashed](0.25,1.7) --(2.45,2.4);
			\node at (0.25,1.7) {};
			\node at (2.45,2.4) {};
			\draw[thick,densely dashed](-3.5,-0.35) --(-1.9,0.15);
			\node at (-3.5,-0.35) {};
			\node at (-1.9,0.15) {};
			\draw[thick,densely dashed](1,0.95) --(3.05,1.7);
			\node at (1,0.95) {};
			\node at (3.05,1.7) {};
			\node (v2) at (-3.25,-1.35) {};
			\node (v3) at (-1.3,-0.7) {};
			\draw[thick,densely dashed]  (v2) edge (v3);
			\node (v6) at (-2.85,-2.15) {};
			\node (v7) at (-0.6,-1.4) {};
			\node (v4) at (1.75,0.25) {};
			\node (v5) at (3.45,0.9) {};
			\draw[thick,densely dashed]  (v4) edge (v5);
			\draw[thick,densely dashed]  (v6) edge (v7);
			\node (v8) at (1,-1) {};
			\node (v9) at (3.5,-0.05) {};
			\draw[thick,densely dashed]  (v8) edge (v9);
			\node (v10) at (-2.2,-2.85) {};
			\node (v11) at (0,-2.1) {};
			\draw  [thick,densely dashed](v10) edge (v11);
			\node (v12) at (3.45,-0.9) {};
			\draw[thick,densely dashed]  (v11) edge (v12);
			\node (v13) at (-1.05,-3.35) {};
			\node (v14) at (2.9,-2.05) {};
			\draw[thick,densely dashed] (v13) edge (v14);
		\end{tikzpicture}
	\end{minipage}
	\centering
	\caption{The structure of ``$diamond\ D$''.}
	\label{daimond}
\end{figure}
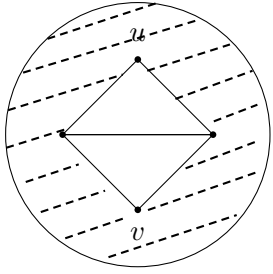
We establish new results on 3-precoloring extension properties for biconnected outerplanar graphs with at most two triangles and partially precolored vertices, formalized in Theorems \ref{prove1} and \ref{prove2}.

\begin{theorem}\label{prove1}
Given a biconnected outerplane graph $G$ containing a single triangle, any precoloring of three independent vertices can be extended to a 3-coloring of $G$.
\end{theorem}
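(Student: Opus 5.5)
We would argue by induction on $|V(G)|$, using the tree structure of the inner faces. Let $a,b,c$ be the three precolored independent vertices. Since $G$ is a biconnected outerplane graph, its weak dual (inner faces, adjacent when they share a chord) is a tree $T$. If $T$ has a single node, then $G$ is the Hamiltonian cycle $H_G$. It contains a triangle only if $G=K_3$, and $K_3$ has no three independent vertices. So $G$ is a cycle of length at least $4$. A cycle with at most three fixed, pairwise nonadjacent vertices can always be $3$-colored: each arc between consecutive fixed vertices has length at least $2$, and a path of length $\ge 2$ with fixed, possibly equal, end colors always extends with three colors. This is the base case. We also record this path-extension fact as a lemma, because it is the tool for every extension step. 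A path $p_0p_1\cdots p_k$ whose endpoints are fixed extends to a proper $3$-coloring if $k\ge 2$, and also if $k=1$ and the end colors differ. The same holds segment by segment when some interior vertices are also fixed.

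For the inductive step we pick a leaf face $F$ of $T$. It is bounded by a chord $xy$ and a path $P=x p_1\cdots p_{k-1} y$ of $H_G$, $k\ge 2$, whose interior vertices have degree $2$. We delete $p_1,\dots,p_{k-1}$ to obtain $G'$. Then $G'$ is again a biconnected outerplane graph with at most one triangle. The precolored vertices that survive are still independent, and there are at most three of them. If $G'$ has no triangle, we use the triangle-free version of the statement (proved the same way, or via bipartite-like parity arguments together with Theorem \ref{Grum and Ak}); if fewer than three vertices survive, we precolor extra vertices arbitrarily or simply drop the constraint. By induction $G'$ has a $3$-coloring $\varphi$ respecting the precoloring, and $\varphi(x)\neq\varphi(y)$. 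We then extend $\varphi$ along $P$ by the path lemma. The only way this can fail is a segment of length $1$, that is, a precolored $p_1$ (or $p_{k-1}$) whose color equals $\varphi(x)$ (resp. $\varphi(y)$).

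The main obstacle is exactly this conflict: a precolored vertex sitting next to an attachment vertex of a leaf face. The worst instance is when $F$ is the unique triangle and its apex $p_1$ is precolored with color $\alpha$, which forces $\{\varphi(x),\varphi(y)\}$ to be the two colors other than $\alpha$. We would try two remedies in order.

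First, choose the leaf face well. When $T$ has at least four leaves, some leaf face contains no precolored vertex in its interior, and we can use that one. Even when $F$ has a precolored interior vertex, we prefer a leaf face with $k\ge 3$ in which the precolored vertices are not adjacent to $x$ or $y$.

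Second, for the remaining configurations, where $T$ is a path or has at most three leaves each carrying a precolored vertex adjacent to an attachment vertex, we strengthen the induction hypothesis instead. In $G'$ we add to the precoloring the constraint ``$x$ avoids $\alpha$'', realized by precoloring $x$ with a suitable color different from $\alpha$. This is allowed when $x$ is not adjacent to the other precolored vertices. When $x$ is adjacent to one of them, we instead recolor by permuting the two colors different from $\alpha$ on a Kempe-type path in $G'$. Because $G'$ has at most one triangle, these two-colored paths behave as in a bipartite graph, and we expect the parity bookkeeping there to be the delicate part of the proof.
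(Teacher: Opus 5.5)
There is a genuine gap, and it is the step you defer: the ``remaining configurations'' (at most three leaves, each with a precolored vertex next to an attachment vertex), which you plan to settle by strengthening the hypothesis and doing Kempe-chain parity bookkeeping. That step cannot be completed, because the statement as given is false and these configurations contain real obstructions.

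Take the Hamiltonian cycle $y_1\,p\,y_2\,q\,y_3\,w$ with the chord $pq$. This is a biconnected outerplane graph whose only triangle is $py_2q$, and $y_1,y_2,y_3$ are independent. Precolor $y_1,y_3$ with $1$ and $y_2$ with $2$. Then $p$ and $q$ are both forced to be $3$, although $pq$ is an edge. This is exactly your ``worst instance'': the triangle is a leaf with a precolored apex, and the other leaf $p\,y_1\,w\,y_3\,q$ has precolored vertices adjacent to both attachment vertices. So no choice of leaf and no recoloring of $G'$ can help.

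The other color patterns fail too.
\begin{itemize}
\item \emph{Three distinct colors.} Take the Hamiltonian cycle $z\,c\,x_1\,b\,x_2\,s\,a$ with chords $zb$ and $x_2a$. Its only triangle is $x_2sa$, and $a,b,c$ are independent but all adjacent to $z$. Coloring $a,b,c$ with $1,2,3$ leaves no color for $z$. Deleting $s$ gives a triangle-free biconnected outerplane graph with the same obstruction, so the ``triangle-free version'' you invoke inside the induction is also false.
\item \emph{Three equal colors.} Take the Hamiltonian cycle $p\,q\,y_2\,b\,r\,y_3\,c\,y_1$ with chords $qr$ and $pr$. Its only triangle is $pqr$, and $y_1,y_2,y_3$ are adjacent to $p,q,r$ respectively. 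Giving every $y_i$ color $1$ forces $pqr$ into two colors.
\end{itemize}
Separately, your remedy for a triangular leaf is too weak even in principle. Forcing only $x$ to avoid $\alpha$ still allows $\varphi(y)=\alpha$, and both attachment vertices must avoid $\alpha$.

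So the missing ingredient is an additional hypothesis, not a cleverer recoloring. The paper's own route does not get around this either. It splits by color pattern (Theorems~\ref{3.1}, \ref{3.2}, \ref{3.3}), attaches a gadget in the outer face, and appeals to the Gr\"unbaum--Aksionov theorem or to Theorem~\ref{Hong2}. Each piece breaks on one of the examples above:
\begin{itemize}
\item In Case~2 of Theorem~\ref{3.3} it meets your first configuration (the triangle $py_2q$) and simply declares $p,q$ nonadjacent.
\item In Theorem~\ref{3.1} the added edge $y_1y_3$ creates a second triangle whenever $y_1,y_3$ have a common neighbor, as $z$ is in the second example. Moreover, under the coloring $1,3,2,1,3$ the special edge of $y_1uy_2vy_3$ is $y_3y_1$, not $uy_2$.
\item In Theorem~\ref{3.2} the auxiliary graph can contain a $K_4^{'}$, as the third example shows, so Theorem~\ref{Hong2} does not apply.
\end{itemize}
A correct version must restrict how the precolored vertices sit relative to one another and to the triangle.
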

\begin{theorem}\label{prove2}
A biconnected outerplane graph $G$ containing exactly two triangules satisfies the property that every 3-coloring assignment to any pair of independent vertices can be extended to a 3-coloring of $G$ under the following three conditions.
   \item [\rm(a)] $G$ forbids the diamond ``$diamond\ D$''
 as a subgraph.
   \item [\rm(b)]  $G$ admits a diamond $D$ as a subgraph, but any two precolored nonadjacent vertices share at most one vertex of $D$.
   \item [\rm(c)] $G$ admits ``$diamond\ D$'' and the precolored two nonadjacent vertices are ``$diamond\ vertices$'', then they must have the same precolored color.

\end{theorem}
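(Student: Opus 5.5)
We argue by induction on $|V(G)|$, using the chord structure of a biconnected outerplane graph. Fix the Hamiltonian outer cycle $H_G=(x_1,\dots,x_n)$ and the two precolored nonadjacent vertices $u,v$ with colors $c(u),c(v)$. If $G$ has no chord, then $G=H_G$ is a cycle. Having two triangles forces $n\ge 4$ with chords, so this case is vacuous. More useful are smaller base configurations. When $G$ is the diamond $D$ itself, the only independent pair is the diamond pair, which is forced to be equal, and this is exactly condition (c). When $G$ consists of two triangles joined at a vertex or through a path, direct inspection works, since any two nonadjacent vertices lie in different triangles or are separated by a free vertex.

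For the inductive step, I will use the standard fact that a biconnected outerplane graph which is not a cycle has a chord $x_ix_j$ cutting off a face $F$ of $G$ that contains no further chord. Call $F$ an \emph{ear}. It is a cycle $x_i x_{i+1}\cdots x_j$ whose internal vertices have degree $2$. Split $G$ along a suitable chord $x_ix_j$ into $G_1$ and $G_2$, both biconnected outerplane, sharing only the edge $x_ix_j$. The triangles distribute between $G_1$ and $G_2$. The plan is to colour the side carrying the precolored vertices first, and then extend to the other side. This extension only has to respect the colours $\alpha\neq\beta$ already placed on $x_i,x_j$ and at most one precolored vertex of that side. The key lemma I will prove first is the following. \emph{In a biconnected outerplane graph with at most one triangle, every proper colouring of an edge $xy$ on the outer cycle, together with a colour for one further vertex $w$ nonadjacent to both $x$ and $y$, extends to a 3-colouring.} When $w$ is adjacent to exactly one of $x,y$, it needs its colour distinct from that neighbour. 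This is essentially Theorem~\ref{prove1} with two of the three vertices replaced by an edge, and it is proved by the same ear induction. For an ear that is a path of length $\ge 2$ between two already coloured vertices, 3-colourability is immediate unless the path has length exactly $2$ and its endpoints carry distinct colours. That can only happen if the ear is a triangle, because a path of length 2 plus the chord forms a triangle.

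Next I choose the splitting chord so that the hypotheses are preserved. If $u,v$ both lie in $G_1$ (say) and $G_2$ contains at most one triangle, I colour $G_1$ by induction; condition (a), (b) or (c) is inherited by $G_1$. Then I extend over $G_2$ by the lemma with no extra vertex. If $u\in G_1\setminus G_2$ and $v\in G_2\setminus G_1$, I place one triangle on each side. I choose colours $\alpha,\beta$ for $x_ix_j$ compatible with $c(u)$ on $G_1$ and with $c(v)$ on $G_2$. The lemma applied on each side then shows that $c(u)$ restricts the admissible pairs $(\alpha,\beta)$ only when $u$ is adjacent to $x_i$ or $x_j$, and similarly for $v$. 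Since there are six ordered pairs $(\alpha,\beta)$ and each constraint forbids a colour at one endpoint, a common admissible pair exists.

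The main obstacle is the case where both triangles are forced on the same side as a precolored vertex, together with the diamond. The triangles cannot be separated by any chord exactly when they share an edge, that is, when they form $D$. Then the interaction of $u,v$ with the diamond vertices must be examined directly. In case (b), at most one of $u,v$ is a diamond vertex, and I will colour $D$ first, choosing the colour of the shared edge to avoid conflict. This uses that $D$ has two degrees of freedom once one diamond vertex is fixed. In case (c), the forced equality is exactly the hypothesis. I expect the delicate point to be a precolored vertex adjacent to a diamond vertex through a short ear, i.e.\ a 4-cycle. In that situation I will check by hand that among the two colourings of $D$ with a prescribed diamond colour, at least one fits.
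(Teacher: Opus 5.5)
There is a genuine gap, and in case (b) it cannot be closed, because the statement itself is false there. Let $G$ have outer cycle $a,u,x,b,v,c$ and chords $ab$, $bc$. This graph is biconnected and outerplane. Its only triangles are $abc$ and $bvc$, which form a diamond $D$ with diamond vertices $a$ and $v$. The vertices $u$ and $v$ are nonadjacent and only $v$ lies in $D$, so hypothesis (b) holds. Precolour $u$ and $v$ both with colour $1$. Every 3-colouring gives $a$ the colour of $v$, namely $1$, but $a$ is adjacent to $u$.

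This is exactly the configuration you postponed to a hand check: a precoloured vertex joined to the opposite diamond vertex through a 4-cycle. The check fails. Once one diamond vertex is fixed, the two colourings of $D$ differ only on the shared edge $bc$, and the opposite diamond vertex is rigid. So $D$ does not have the ``two degrees of freedom'' you invoke. The paper's own argument breaks at the same spot. In its Case 2 (marginal plus striped triangle), it colours the opposite diamond vertex $w'$ with the colour of the precoloured vertex lying in a 4-face. Here that vertex is $u$, which is adjacent to $w'=a$.

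Independently of the diamond, your key lemma is false, so the splitting step is unjustified even in case (a). Take outer cycle $w,p,y,x,z$ with the single chord $px$. There is one triangle $pyx$, $xy$ is an outer edge, and $w$ is adjacent to neither $x$ nor $y$. Colour $x,y,w$ with $2,3,1$. Then $p$ is forced to colour $1$, yet $p$ is adjacent to $w$. So a precoloured vertex not adjacent to the ends of the splitting chord can still impose the constraint ``some endpoint of the chord receives $\varphi(w)$''. That is not of the form ``forbid a colour at one endpoint'', so your count over the six ordered pairs $(\alpha,\beta)$ does not go through. Gluing a triangle with apex $v$ onto $xy$ reproduces the counterexample above.

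Cases (a) and (c) are in fact true, but (a) needs different bookkeeping. Follow the faces $R_0,\dots,R_m$ on the weak-dual path from $u$ to $v$. For each separating chord, record which of its endpoints can receive $\varphi(u)$: one endpoint, the other, or neither, up to swapping the two remaining colours. The state ``neither endpoint has colour $\varphi(u)$'' is preserved by every non-triangular face, so it can be lost only at a triangle. Two consecutive triangles on this path would form a diamond. Using this, one checks that both $\varphi(v)=\varphi(u)$ and $\varphi(v)\neq\varphi(u)$ remain achievable at $v$.
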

Since 1-connected outerplane graphs are subgraphs of certain 2-connected outerplane graphs obtained by edge additions, they share the same number of triangles.

\begin{corollary}Consider a 1-connected outerplane graph $G$ containing precisely one triangle. For any subset of three independent vertices in $G$, every proper precoloring of these vertices admits a completion to a 3-coloring of $G$."
\end{corollary}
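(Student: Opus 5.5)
The plan is to reduce the corollary to Theorem~\ref{prove1} by embedding $G$ into a larger biconnected outerplane graph $G'$. This $G'$ should have three properties: it contains $G$ as a subgraph, it has exactly the same (single) triangle, and it adds no edge between two vertices of $G$. Then any precoloring of three independent vertices of $G$ is a precoloring of three independent vertices of $G'$. Theorem~\ref{prove1} extends it to a proper 3-coloring of $G'$, and restricting to $V(G)$ gives the required 3-coloring of $G$. The remark before the corollary suggests adding edges only, but adding edges may create new triangles or join precolored vertices. So I will instead add short paths through \emph{new} vertices, which avoids both problems.

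\textbf{The augmentation step.} Suppose $G$ is connected, outerplane and not biconnected. Since $G$ contains a triangle, it has at least three vertices. Take a cut vertex $c$. Walking along the boundary of the outer face, we meet two consecutive neighbours $a,b$ of $c$ that lie in different blocks at $c$. Add two new vertices $x,y$ and the path $a\,x\,y\,b$, drawn inside the outer face close to $c$. Four facts need checking.
\begin{itemize}
\item The result is still outerplane: all old and new vertices stay on the outer boundary, because the new region bounded by $c\,a\,x\,y\,b$ is a new inner face.
\item The number of blocks strictly decreases, since the blocks of $a$ and $b$ at $c$ merge with the new path into one block.
\item No edge is added between original vertices, so every independent set of $G$ remains independent.
\item No new triangle appears. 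Any new cycle uses the whole path $a\,x\,y\,b$, because $x$ and $y$ have degree 2, together with an $a$--$b$ path in the old graph. Since $a$ and $b$ are in different blocks, every such path passes through $c$ and has length at least $2$, so the new cycle has length at least $5$.
\end{itemize}

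\textbf{Iteration.} I will repeat the step until the graph becomes biconnected. In later steps the same length argument applies to the current graph. It only needs that the chosen $a,b$ are non-adjacent and lie in different blocks at a cut vertex, and this remains true because the current graph is again a connected outerplane graph. The process terminates because the number of blocks decreases at each step. The final graph $G'$ is a biconnected outerplane graph with exactly one triangle, namely the original one, and it contains $G$ as an induced subgraph on $V(G)$. Applying Theorem~\ref{prove1} to $G'$ with the same three precolored vertices then finishes the proof.

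\textbf{Main obstacle.} The delicate point is outerplanarity of the augmentation. I must make sure that $a$ and $b$ can be chosen consecutive along the outer facial walk at $c$, so that the added path lies in the outer face without hiding any vertex from the outer boundary. I would justify this from the cyclic order of edges around $c$ in the outerplane embedding: between two blocks at $c$ the outer face passes through $c$, and the last edge of one block and the first edge of the next bound a common outer-face corner at $c$. If this argument proves awkward, a fallback is a block-by-block coloring that roots the block tree at a block containing precolored vertices and permutes colors in the child blocks. However, I expect that fallback to need extra care when a block contains two precolored vertices as well as its attaching cut vertex.
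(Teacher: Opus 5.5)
Your reduction is sound, and it is more careful than the paper's. The paper only asserts that a 1-connected outerplane graph becomes 2-connected ``by edge additions'' with the same number of triangles. That assertion already fails for a triangle $pqr$ with a pendant vertex $w$ at $p$: any 2-connected graph on the same vertex set must join $w$ to $q$ or $r$, which creates a second triangle. The paper also does nothing to stop an added edge from joining two precolored vertices. Your subdivided paths $a\,x\,y\,b$ avoid both problems, and the checks you list are correct:
\begin{itemize}
\item the blocks at $c$ occupy contiguous intervals of the rotation at $c$, so the corner between $ca$ and $cb$ lies in the outer face;
\item $c$ keeps at least one other outer corner;
\item every new cycle contains $a\,x\,y\,b$ and an $a$--$b$ path through $c$, so it has length at least $5$;
\item two blocks merge at each step.
\end{itemize}

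The step that fails is the last one, and no repair is possible, because Theorem~\ref{prove1} is false, and so is the corollary. Let $G$ be the bull graph: a triangle $p\,y_2\,q$ with a pendant vertex $y_1$ at $p$ and a pendant vertex $y_3$ at $q$. It is a connected outerplane graph with a cut vertex and exactly one triangle, and $\{y_1,y_2,y_3\}$ is independent. The precoloring $y_1,y_3\mapsto 1$, $y_2\mapsto 2$ forces $p=3$ and $q=3$, yet $pq$ is an edge.

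Adding one vertex $t$ adjacent to $y_1$ and $y_3$ (or running your own augmentation) turns this into a biconnected outerplane counterexample to Theorem~\ref{prove1}. This is exactly the configuration that the paper's proof of Theorem~\ref{3.3}, Case~2 (with $d(y_1,y_2)=d(y_2,y_3)=2$), dismisses. It notes that here $y_1$ and $y_3$ cannot receive the same color and then simply assumes $p$ and $q$ are nonadjacent. That observation shows the theorem fails in this case; it does not exclude the case.

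So you have correctly reduced the corollary to a false statement. The corollary needs an extra hypothesis ruling out such configurations before any proof, yours or the paper's, can succeed.
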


\begin{corollary}Consider a 1-connected outerplane graph $G$ containing precisely two triangless. For any subset of two independent vertices in $G$, every proper precoloring of these vertices admits a completion to a 3-coloring of $G$ in the following three cases.
   \item [\rm(a)] $G$ forbids the diamond ``$diamond\ D$''
 as a subgraph.
   \item [\rm(b)] $G$ admits a diamond $D$ as a subgraph, but any two precolored nonadjacent vertices share at most one vertex of $D$.
   \item [\rm(c)] $G$ admits  ``$diamond\ D$'' and the precolored two nonadjacent vertices are ``$diamond\ vertices$'', then they must have the same precolored color.
\end{corollary}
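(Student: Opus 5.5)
The plan is to work block by block rather than to add edges. The paper's remark about edge additions is not enough by itself: a careless augmentation could create a new triangle or a diamond, or make the two precolored vertices adjacent.

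\emph{Setup.} Let $u,v$ be the two precolored independent vertices and consider the block--cut tree of $G$. Every triangle of $G$ lies inside a single block, so each block is either a $K_2$ or a biconnected outerplane graph with at most two triangles. Any diamond $D$ also lies inside one block, since it is $2$-connected. Every such block is $3$-colorable by Theorem~\ref{Grum and Ak}(i). If $B$ and $B'$ share a cut vertex $c$, colorings of them can be glued after permuting the colors of one block so that the two colors of $c$ agree. Hence every block not on the tree path between a block containing $u$ and a block containing $v$ can be colored last, by permuting colors to match its attaching cut vertex.

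\emph{The path blocks.} Let $B_1,\dots,B_m$ be the blocks on the path, with cut vertices $c_1,\dots,c_{m-1}$. Put $c_0=u$ and $c_m=v$. In block $B_i$ the only vertices whose colors are constrained are the entry vertex $c_{i-1}$ and the exit vertex $c_i$. Given a coloring of $B_i$, permuting colors lets us fix the color of $c_{i-1}$ and still choose freely between the two remaining colors for $c_i$, whenever $c_i$ gets a color different from that of $c_{i-1}$. So everything depends on which relations ("equal" or "different") between the colors of $c_{i-1}$ and $c_i$ are realizable in $B_i$:
\begin{itemize}
\item If $c_{i-1}c_i$ is an edge, only "different" is realizable.
\item If $c_{i-1}$ and $c_i$ are nonadjacent, both relations are realizable. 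This follows from Theorem~\ref{prove2} applied to $B_i$ when $B_i$ has two triangles, and from Theorem~\ref{prove1} or Theorem~\ref{Hong1} otherwise.
\item The exception is when $\{c_{i-1},c_i\}$ are the diamond vertices of a diamond in $B_i$. Then only "equal" is realizable, by Theorem~\ref{prove2}(c).
\end{itemize}

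\emph{Combining.} If $m=1$, then $u,v$ are nonadjacent in one block and Theorem~\ref{prove2} (or Theorem~\ref{prove1}) applies directly, with hypotheses (a)--(c) inherited. If $m\ge 2$, the colors are propagated along the chain. Any block admitting both relations gives full freedom, so we choose its relation at the end to match the color of $v$. If no block is free but at least two steps are "different", then alternating through the third color realizes both $\mathrm{col}(u)=\mathrm{col}(v)$ and $\mathrm{col}(u)\neq\mathrm{col}(v)$. Both checks are easy case analyses on the sequence of forced relations.

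\emph{Main obstacle.} The hard step is a chain whose steps are all forced, for example a single "different" step combined with diamond "equal" steps. Then the relation between the colors of $u$ and $v$ is forced, even though $u,v$ need not be the diamond vertices themselves. A concrete instance: $u$ adjacent to a cut vertex $c_1$ that is a diamond vertex whose partner is $v$. This configuration satisfies (b) literally, yet it forces $\mathrm{col}(u)\neq\mathrm{col}(v)$. I would first try to show that conditions (b)/(c), read relative to the blocks on the path, exclude such chains. If that fails, the corollary needs an extra hypothesis forbidding a diamond pair that is a cut-vertex pair on the $u$--$v$ path, and I would state that restriction explicitly rather than force the argument.
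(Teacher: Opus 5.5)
\textbf{Verdict: the statement is false, so no proof can succeed.} Your ``main obstacle'' is a genuine counterexample. The first branch of your plan, showing that (b)/(c) exclude such chains, therefore cannot work.

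Take the diamond $D$ on $\{c_1,a,b,v\}$ with common edge $ab$ and diamond vertices $c_1,v$, and attach a pendant vertex $u$ at $c_1$.
\begin{itemize}
\item This is a connected outerplane graph of connectivity $1$ with exactly two triangles.
\item The vertices $u,v$ are independent, and only $v$ lies in $D$, so (b) holds.
\item If $u$ and $v$ both get color $1$, then $a,b$ must get $2$ and $3$. This forces $c_1$ to get color $1$, the color of its neighbour $u$.
\end{itemize}
The paper's own proof adds edges to make $G$ $2$-connected and then applies Theorem~\ref{prove2}. As you suspected, it breaks exactly here. The vertex $u$ needs a second neighbour, and each of $ua$, $ub$, $uv$ either creates a third triangle or makes $u,v$ adjacent.

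\textbf{The obstruction is not confined to cut vertices.} So your middle bullet and your proposed fallback hypothesis both fail. Take the outer cycle $(c_1,u,w,a,v,b)$ with chords $c_1a$ and $ab$.
\begin{itemize}
\item It is $2$-connected, and its only triangles are $c_1ab$ and $avb$.
\item The independent pair $u,v$ satisfies (b).
\item The same computation shows that $u,v$ cannot share a color.
\end{itemize}
Hence Theorem~\ref{prove2}(b) and Theorem~\ref{3.5} are themselves false. You cannot cite them for ``nonadjacent $c_{i-1},c_i$ realize both relations'' or for the case $m=1$. Forbidding only a diamond pair of cut vertices on the $u$--$v$ path is not enough. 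Also, Theorem~\ref{Hong1} as quoted needs the hypothesis ``at most one triangle''.

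\textbf{A corrected statement.} The right extra hypothesis is: when $u,v$ get the same color, neither may be a diamond vertex whose partner is adjacent to the other. Your block--cut framework and relation calculus are sound, and under this hypothesis they complete a short proof that never uses Theorem~\ref{prove2}.
\begin{itemize}
\item Take a minimal counterexample with at most two triangles. Any vertex other than $u,v$ of degree at most $2$ can be deleted and colored last, because the hypothesis passes to the subgraph.
\item After these deletions, every degree-$2$ vertex of a block is $u$, $v$ or a cut vertex. So the block--cut tree is a path. Each block is either a single edge or a ``strip'': faces with a triangle at each end and only $4$-faces in between.
\item With at most two triangles there is at most one strip. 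It is either a diamond, or a ladder through at least one $4$-face.
\item A ladder lets its far tip take any color.
\end{itemize}
The only forced chains are then the diamond alone and the diamond plus one edge, which are exactly the excluded configurations.
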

\section{Preliminaries}

In this contribution, only finite, undirected, and simple graphs are taken into account. Consider a graph $G=(V,E)$, where the number of vertices and edges are denoted by $n$ and $m$ respectively.The degree $d_G(x)$ (also simply written as $d(x)$) of a vertex $x$ is defined as the number of its neighbors in $G$. For any two vertices $x$ and $y$ in $G$, the distance between them, denoted as $d_G(x,y)$ (or $
d(x,y)$), is the length of the shortest path connecting $x$ and $y$.
The open - neighborhood of a vertex $x$ in $G$ is defined as $N(x)$=$\{y\ |\ xy\in E(G)\}$. Let $N_{R_i}(x)$ be the set of colored neighbors of the vertex $x$ within the face $R_i.$
 Let $F_G$
  denote the set of all faces of
 $G$, and $f$ represent the number of faces. A face with $k$ edges is called a $k$-face.




\begin{observation}
	If $F$ is an $n$-$face\ (n\ge6)$ of an outerplane graph $G$, then $F$ can be decomposed into 4-faces and 5-faces through adding chords.
\end{observation}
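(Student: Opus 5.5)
Since $F$ is an inner face of an outerplane graph, all of its vertices lie on the outer Hamiltonian cycle $H_G$. Hence $F$ is bounded by a cycle $C=(v_1,v_2,\ldots,v_n)$ whose vertices appear in the same cyclic order along $H_G$. A chord $v_iv_j$ drawn inside $F$ is a straight segment in the region of $F$, and the region of $F$ meets no other edge of $G$. So adding it creates no crossings, and the resulting graph is still an outerplane graph with the same vertex set on the outer face. The chord must join two vertices that are non-consecutive on $C$ and not already adjacent. In an outerplane graph, two vertices of an inner face that are non-consecutive on its boundary can only be adjacent through an edge lying inside that face, since otherwise the $K_4$/$K_{2,3}$-minor structure would be violated. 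Such an edge would be a chord of $F$, contradicting that $F$ is a face. I plan to verify this carefully using the Hamiltonian outer cycle, and I expect it to be the only real subtlety of the argument.

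The decomposition itself is by induction on $n\ge 6$. For $n=6$, add the chord $v_1v_4$. It splits $F$ into two faces $(v_1,v_2,v_3,v_4)$ and $(v_4,v_5,v_6,v_1)$, both 4-faces. For $n=7$, the same chord $v_1v_4$ gives a 4-face $(v_1,v_2,v_3,v_4)$ and a 5-face $(v_4,v_5,v_6,v_7,v_1)$. For $n\ge 8$, add $v_1v_4$. This cuts off the 4-face $(v_1,v_2,v_3,v_4)$ and leaves the face $(v_4,v_5,\ldots,v_n,v_1)$, which has $n-2\ge 6$ vertices. We then apply the induction hypothesis to this smaller face, which is again an inner face of an outerplane graph.

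Alternatively, a direct count works. Write $n=4a+5b$ with $a,b\ge 0$, which is possible for every $n\ge 4$ except $n=6,7$ (handled above). One can also use the identity that a polygon with $n$ sides cut by $k$ non-crossing chords into faces of sizes $s_1,\ldots,s_{k+1}$ satisfies $\sum_i (s_i-2)=n-2$. Choosing sizes in $\{4,5\}$ summing appropriately and realising them by a fan of chords from $v_1$ gives the decomposition. I would present the induction, since it is shorter.

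Each added chord keeps the graph simple and outerplane, so the final graph is an outerplane supergraph of $G$ in which $F$ has been replaced by 4-faces and 5-faces. The point needing care is that no new triangle is created: all new faces have length $4$ or $5$, and any new 3-cycle would have to use a chord together with at most one boundary path of length 2. That would mean a chord joins vertices at distance 2 on $C$, and the fan from $v_1$ to $v_4$ never does this. This property is presumably what the observation is later used for, in reducing to Theorem~\ref{Grum and Ak}.
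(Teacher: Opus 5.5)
The paper gives no argument for this observation. It is simply asserted and then used to build the graph $H$ in the next observation, so there is no proof to compare against. Your induction is a correct and complete proof: cut off the 4-face $v_1v_2v_3v_4$ with the chord $v_1v_4$, pass from $n$ to $n-2$, and use the bases $n=6,7$. Your check that no new triangle appears is exactly what the later use of $H$ needs. Three points in the write-up should be fixed.

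\textbf{Non-adjacency of non-consecutive vertices.} This key fact cannot be justified by forbidden minors. $C$ plus an edge $xy$ drawn outside it is just a cycle with a chord, which is abstractly outerplanar. The obstruction is to the given embedding, not to the graph. The Hamiltonian-cycle argument you allude to does work:
\begin{enumerate}
\item Since $C$ follows the cyclic order of $H_G$, vertices non-consecutive on $C$ are non-consecutive on $H_G$. So $xy$ would be a chord of $H_G$.
\item This chord splits the disc bounded by $H_G$ into two regions.
\item The connected face $F$ avoids $xy$, so it lies in one of them.
\item Yet the boundary of $F$ contains vertices of $C$ in both open arcs of $H_G-\{x,y\}$, a contradiction.
\end{enumerate}

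\textbf{The ``direct count'' aside.} Delete it. The face sizes do not sum to $n$, so $n=4a+5b$ is the wrong condition, and $n=11$ is not of that form either. From $\sum_i(s_i-2)=n-2$ the right condition is $2a+3b=n-2$, which is solvable for every $n\ge 4$.

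\textbf{No new triangles.} This claim is cleaner without tracking which chords you added. In an outerplane graph every triangle bounds a face. Its interior can contain no vertex, since all vertices are on the outer face. It can contain no edge either, since such an edge would join two vertices of the triangle. So once every new face has length $4$ or $5$, the set of triangles is unchanged.
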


\begin{observation}\label{observation2}
	For any given outerplane graph $G$ with at most two triangles of order $n\ge6$, we add some chords and get a new graph $H$. Any face in the graph $H$ has length no more than 5. It is clear that the graph $G$ is a subgraph of $H$ and the method of adding chords is not unique.
\end{observation}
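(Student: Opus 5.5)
The plan is to induct on the total excess $\sum_{F}\max\{0,|F|-5\}$ over the inner faces $F$ of $G$, where $|F|$ is the length of the boundary of $F$. Each step adds one chord inside a long face. The chord will cut the face into two faces of lengths at least $4$ and at most $|F|-2$. The outer face is the Hamiltonian cycle $H_G$ and is left untouched, so ``any face'' is read as any inner face.

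For the reduction step I first assume $G$ is biconnected, so every inner face $F$ is bounded by a cycle $v_1v_2\cdots v_k$. Take an inner face with $k\ge 6$ and add the chord $v_1v_4$ drawn inside $F$. This splits $F$ into the $4$-face $v_1v_2v_3v_4$ and the $(k-2)$-face $v_4v_5\cdots v_kv_1$. Since $k\ge 6$, both pieces have length at least $4$, so no new triangle is created and the number of triangles stays at most two. Since $k-2<k$, the excess strictly decreases. The chord lies inside an inner face, so it crosses no edge and all vertices remain on $H_G$; the new graph is again an outerplane graph. Iterating until every inner face has length $4$ or $5$ gives $H\supseteq G$. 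Non-uniqueness is immediate, because any $v_i$ can serve as the starting vertex: for instance a $6$-face can be split by $v_1v_4$ or by $v_2v_5$.

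The point needing real care, and the main obstacle, is that the added chord must not already be an edge of $G$, so that $H$ stays simple and the new face really is a $4$-face. I would argue as follows. Suppose $v_1v_4\in E(G)$. It is not drawn inside $F$, so it lies outside $F$. Then the closed curve formed by $v_1v_4$ together with one of the two boundary paths $v_1v_2v_3v_4$ or $v_4v_5\cdots v_kv_1$ of $F$ encloses the interior vertices of the other path. Each of these paths has at least one interior vertex because $k\ge 6$. The enclosed vertices are then separated from the outer face, contradicting that all vertices lie on $H_G$. An alternative is simply to choose the chord $v_iv_{i+3}$ among non-edges, which the same separation argument shows always exists.

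For $G$ that is not biconnected, I would first add edges between consecutive blocks along the outer boundary walk, as in the remark before the corollaries, to obtain a biconnected outerplane supergraph. I would add these edges so that no new triangles arise, which is possible by connecting vertices at distance at least $2$ along the boundary walk. The argument above then applies to that supergraph.
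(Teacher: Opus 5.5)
Your argument for biconnected $G$ is correct, and it is essentially the argument the paper takes for granted. The paper gives no proof of this observation beyond the preceding, equally unproved, observation that an $n$-face with $n\ge 6$ can be cut into $4$- and $5$-faces by chords. Reading ``any face'' as ``any inner face'' is forced, since the outer face has length $n\ge 6$. Your separation argument showing $v_1v_4\notin E(G)$ is correct.

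One justification is missing. ``Both pieces have length at least $4$, so no new triangle is created'' only rules out new \emph{facial} triangles. In a general plane graph, a common neighbour $w$ of $v_1$ and $v_4$ lying outside $F$ would give a non-facial triangle $v_1v_4w$. Outerplanarity is what closes this: after adding the chord every vertex is still on the outer face, so no vertex lies inside any cycle. Since a $3$-cycle has no chord, every triangle then bounds an inner face, and the only new inner faces have lengths $4$ and $k-2$.

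The paragraph on non-biconnected $G$ does not work as stated. An outerplane graph cannot in general be made biconnected by adding edges without creating triangles. Take $G=K_{1,5}$, of order $6$ with no triangles. Every biconnected outerplane supergraph on the same vertex set has a Hamiltonian outer cycle, in which the centre is adjacent to all five other vertices. Each of the four cycle edges not at the centre then closes a triangle with it. Your recipe fails for the same reason:
\begin{itemize}
\item the two walk-neighbours of a cut vertex $c$ are at distance $2$, and joining them closes a triangle through $c$;
\item joining vertices farther apart along the walk encloses the vertices in between, and any of them that occurs only once on the walk leaves the outer face.
\end{itemize}
The remark before the corollaries that you appeal to has exactly this defect.

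The step is also unnecessary. No vertex of an outerplane graph lies inside a cycle, so a cycle enclosing an inner face $F$ with minimal interior has neither vertices nor chords inside it. Hence every inner face is already bounded by a cycle, and your chord-splitting argument applies face by face with no biconnecting step. In the paper's own setting, where the H-embedding comes with a Hamiltonian outer cycle $H_G$, $G$ is biconnected anyway.
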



In the $H$ - embedding of graph $G$, a triangle that lies completely within the interior of the embedding is defined as an inner triangle. An internal triangle does not share any edge with the outer boundary of the $H$-embedding $H_G$. If an inner triangle shares one edge with $H_G$, it is termed a striped triangle. Finally, when an inner triangle shares two edges with $H_G$, it is called a marginal triangle.

A proper coloring of a graph $G$ ensures that adjacent vertices receive distinct colors. A graph is $k$-colorable if it can be colored with at most
$k$ colors under this constraint. The chromatic number $\chi(G)$ denotes the minimal $k$ for which such a coloring exists.



The precoloring extension problem concerns the determination of whether a partial coloring of a graph can be consistently expanded to a proper $k$-coloring of the entire graph. Given a graph $G=(V,E)$, a vertex subset
$W \subseteq V$, and a preassigned coloring function $c: W \rightarrow \{1, 2, ..., k\}$, does there exist a proper $k$-coloring $c'$ of $G$ such that such that $c'(v) = c(v)$ for all $v\in W$?~\cite{Precoloring}
\begin{figure}
	\begin{minipage}[H]{0.5\linewidth}
		\begin{tikzpicture}[scale=0.4]
			\fill (-3.86,4.12) circle(2.5pt) ;
			\node [above=4pt] at (-3.86,4.12) {$a$};
			\fill (-7.14,1.72) circle(2.5pt) ;
			\node [left=4pt] at (-7.14,1.72) {$e$};
			\fill (-0.56,1.74) circle(2.5pt) ;
			\node [right=4pt] at (-0.56,1.74) {$b$};
			\fill (-5.87,-2.14) circle(2.5pt) ;
			\node [below=4pt] at (-5.87,-2.14) {$d$};
			\fill (-1.81,-2.13) circle(2.5pt) ;
			\node [below=4pt] at (-1.81,-2.13) {$c$};
			\fill (4,2) circle(2.5pt) ;
			\node [above=4pt] at (4,2) {$x$};
			\fill (4,-2) circle(2.5pt) ;
			\node [below=4pt] at (4,-2) {$y$};
			\fill (11,-2) circle(2.5pt) ;
			\node [below=4pt] at (11,-2) {$z$};
			\draw (-3.86,4.12)--(-7.14,1.72) --(-5.87,-2.14)--(-1.81,-2.13)--(-0.56,1.74)--(-3.86,4.12);
			\draw (4,2)--(4,-2)--(11,-2)--(4,2);
			\node [below=2pt] at (-3.84,-3.13) {$Graph\ G$};
			\node [below=2pt] at (7.5,-3) {$Graph\ G'$};
		\end{tikzpicture}
	\end{minipage}
	\centering
	\caption{A mapping function \( f \) satisfies \( f(a) = x \), \( f(b) = y \), \( f(c) = z \), \( f(d) = y \), \( f(e) = z \), such that graphs \( G \) and \( G' \) are homomorphic.}
	\label{homomorphism}
\end{figure}
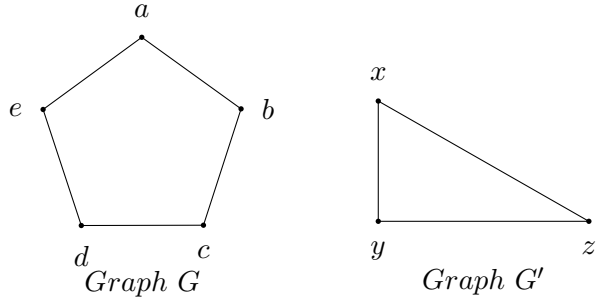

A graph homomorphism  $f: G \rightarrow H$(where $G=(V_G,E_G), H=(V_H,E_H)$) is a vertex mapping $f:V_G\rightarrow V_H.$ satisfying the edge preservation condition: for all $(u,v)\in E_G$, $(f(u),f(v))\in E_H.$  Notably, homomorphisms exist from both a 4-cycle and a 5-cycle to \( K_3 \) (Figure \ref{homomorphism}).

\section{ Verification of Theorems \ref{prove1} and \ref{prove2}}
The proof of Theorem \ref{prove1} is divided into three auxiliary theorems. We first tackle the case where the three precolored vertices are assigned distinct colors, which corresponds to Theorem \ref{3.1}.
\begin{theorem}\label{3.1}
	Given a biconnected outerplane graph $G$ with one triangle. If any three independent vertices are precolored with different colors, then the coloring can be extended to a proper 3-coloring of $G$.
\end{theorem}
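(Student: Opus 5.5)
We would argue by induction on the number of inner faces, using the tree structure of the weak dual of a biconnected outerplane graph. By Observation~\ref{observation2}, we first add chords so that every inner face of the resulting graph $H$ has length $3$, $4$ or $5$. The only $3$-face is the original triangle $T$, since the added chords are chosen to avoid creating new triangles. The chords must also be chosen so that they never join two of the three precolored vertices $x_1,x_2,x_3$ (colored $1,2,3$). A proper $3$-coloring of $H$ extending the precoloring restricts to one of $G$, so it suffices to work in $H$.

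The weak dual of $H$ is a tree, so we may grow $H$ face by face, starting from a root face chosen as follows: the triangle $T$ if it contains a precolored vertex, and otherwise a face containing one of the $x_i$. We color the root face compatibly with the precolored vertices on it. Each subsequent face $F$ is attached to the already colored part along a single chord $uw$, whose endpoints already have distinct colors. We then extend the coloring to the remaining $|F|-2$ vertices of $F$; these form a path of length at most $3$ from $u$ to $w$, and some of its vertices may be precolored. The key steps are the following local extension lemmas, phrased in terms of homomorphisms of $C_4$ and $C_5$ to $K_3$ as in Figure~\ref{homomorphism}:
\begin{itemize}
\item[(i)] A $4$-face $u\,a\,b\,w$ with $c(u)\neq c(w)$ and at most one of $a,b$ precolored always extends.
\item[(ii)] A $5$-face $u\,a\,b\,d\,w$ with at most two of the internal vertices precolored extends, provided the precolored internal vertices are nonadjacent.
\item[(iii)] The triangle extends whenever its free vertex is not precolored with the third color.
\end{itemize}
Each case is a finite check of list colorings on a path of length $\le 3$ whose two ends are fixed.

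The difficult part is controlling conflicts. Since the $x_i$ are pairwise independent, no face ever carries two adjacent precolored vertices. The remaining danger is a $4$-face in which both internal vertices are precolored, or a vertex that is forced to see all three colors. For the first danger, the plan is to exploit the freedom in where to add chords, and also the freedom in choosing the root: we would re-add chords so that each precolored vertex lying on a $4$-face is an endpoint of the attaching chord rather than an internal vertex. For the second danger, we would propagate the coloring along the unique dual-tree paths between the faces containing $x_1,x_2,x_3$. Along such a path there is always slack in choosing colors, by Theorem~\ref{Grum and Ak}(ii) applied to the $4$- and $5$-faces met along the way.

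We expect the main obstacle to be a single vertex $v$ adjacent to all three of $x_1,x_2,x_3$, for instance at the center of a fan of $4$-faces. In that configuration no re-rooting or re-chording helps, because $v$ sees all three colors. Our first attempt would be to show that outerplanarity together with biconnectivity and the one-triangle hypothesis rules this configuration out, or forces two of the $x_i$ to coincide or to be adjacent. If that attempt fails, the argument as planned would need an additional hypothesis at exactly this step, and we would examine this case before anything else.
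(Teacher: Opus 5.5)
The configuration you flag at the end cannot be excluded, and it is fatal. The statement as written is false, so your ``first attempt'' must fail and your plan cannot be completed.

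Take the outer cycle $w\,x_1\,a_1\,x_2\,a_2\,x_3\,t$ with the two chords $wx_2$ and $wx_3$; they share the endpoint $w$, so they do not cross. This is a biconnected outerplane graph. Its inner faces are the $4$-faces $wx_1a_1x_2$ and $wx_2a_2x_3$ and the triangle $wx_3t$. There is no other triangle, since $N(w)=\{x_1,x_2,x_3,t\}$ spans only the edge $x_3t$ and $G-w$ is the path $x_1a_1x_2a_2x_3t$. The vertices $x_1,x_2,x_3$ are pairwise nonadjacent, but all three are adjacent to $w$. So the precolouring $1,2,3$ leaves no colour for $w$. The same example also refutes Theorem~\ref{prove1}. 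Any correct version must therefore carry an extra hypothesis that at least forbids a common neighbour of the three precoloured vertices. You located exactly the right step.

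The paper's proof breaks at the same place. It adds $u,v$ and the edges $y_1u,uy_2,y_2v,vy_3,y_1y_3$, and claims no new triangle appears because $d(y_1,y_2),d(y_2,y_3)\ge 2$. But the new edge $y_1y_3$ closes a triangle with every common neighbour of $y_1$ and $y_3$. Moreover, the colouring $1,3,2,1,3$ of $C=y_1uy_2vy_3$ is forced by the precolouring, and its unique colour sits on $y_2$. Its special edge is therefore the edge opposite $y_2$, namely $y_3y_1$, not $uy_2$ as the paper states. Hence Theorem~\ref{Grum and Ak}(ii) applies only when $y_1$ and $y_3$ have no common neighbour. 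In the example above, $w$ is a common neighbour of every pair, so no relabelling rescues the construction.

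Independently of this, your local lemmas need repair. In (i) and (ii), a precoloured internal vertex adjacent to $u$ or $w$ may carry the colour already given to that endpoint, so the earlier choices must anticipate it. Lemma (iii) is stated backwards: the triangle extends exactly when its free vertex is uncoloured or precoloured with the third colour.
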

\begin{proof}
	Assume that any precolored three independent vertices of $G$ are $y_1$, $y_2$ and $y_3$ in clockwise order and $d(y_1,y_2)\ge2$, $d(y_2,y_3)\ge2$. Let $F$ be the outer face of $G$. From the plane graph $G$, construct $G'$ by inserting vertices $u,v$ and edges $y_1u$, $uy_2$, $y_2v$, $vy_3$, $y_1y_3$ (Figure \ref{figure3.1}). Here $G'$ can not produce new triangles, because $d(y_1,y_2)\ge2$, $d(y_2,y_3)\ge2$.
	In $G'$ color the vertices of the 5-face $C=y_1uy_2vy_3$ by 1,3,2,1,3, respectively. Observe that the special edge $uy_2$ is not in the triangle. By Theorem \ref{Grum and Ak} (ii), the 3-coloring of $C$ can be extended to a 3-coloring of $G'$, which induces a 3-coloring of $G$ in which $y_1$, $y_2$ and $y_3$ are colored differently.
\end{proof}
\begin{figure}[h!]
	\centering
	\begin{minipage}[H]{0.4\linewidth}
		\begin{tikzpicture}[scale=0.5]
			\fill (-3,5) circle(4.5pt) ;
			\node [right=4pt] at (-3,5) {$y_1$};
			\fill (6,5) circle(4.5pt) ;
			\node [left=4pt] at (6,5) {1\  \ $y_1$};
			\fill (-3,3) circle(2.5pt) ;
			\node [left=4pt] at (-3,3) {};
			\fill (6,3) circle(2.5pt) ;
			\node [left=4pt] at (6,3) {};
			\fill (-3,1) circle(4.5pt) ;
			\node [right=4pt] at (-3,1) {$y_2$};
			\fill (6,1) circle(4.5pt) ;
			\node [left=4pt] at (6,1) {2\  \ $y_2$};
			\fill (-3,-1) circle(2.5pt) ;
			\node [left=4pt] at (-3,-1) {};
			\fill (6,-1) circle(2.5pt) ;
			\node [left=4pt] at (6,-1) {};
			\fill (-3,-3) circle(4.5pt) ;
			\node [right=4pt] (v1) at (-3,-3) {$y_3$};
			\fill (6,-3) circle(4.5pt) ;
			\node [left=4pt] (v1) at (6,-3) {3\  \ $y_3$};
			\node [right=4pt] (v3) at (-3,-4) {};
			\node [right=4pt] (v3) at (6,-4) {};
			\node [left=4pt] at (-3,6) {};
			\node [left=4pt] at (-3.9,5.68) {};
			\node [left=4pt] at (5.1,5.68) {};
			\node [left=4pt] (v2) at (-3.98,-3.5) {};
			\node [left=4pt] (v2) at (5.02,-3.5){};
			\node [left=4pt] at (-3.5,1) {};
			\node [left=4pt] at (5.5,1) {};
			\draw (-3,6)--(-3,5)--(-3,3)--(-3,1)--(-3,-1)--(-3,-3)--(-3,-4);
			\draw (6,6)--(6,5)--(6,3)--(6,1)--(6,-1)--(6,-3)--(6,-4);
			\draw  (-3,-3)--(-3.98,-3.5);
			\draw(6,-3) node (v5) {}--(5.02,-3.5);
			\draw  (-3,1)--(-3.5,1);
			\draw  (6,1) node (v7) {}--(5.5,1);
			\draw (-3,5)--(-3.9,5.68);
			\draw(6,5) node (v4) {}--(5.1,5.68);
			\node at (0,1) {\Large$F$};
			\node[right=4pt] (v6) at (6.5,3) {3};
			\node[right=4pt] (v6) at (6.5,-1) {1};
			\fill (8,3) circle(4.5pt) ;
			\node[right=4pt] (v6) at (8,3) {$u$};
			\fill (8,-1) circle(4.5pt) ;
			\node[right=4pt] (v8) at (8,-1) {$v$};
			\draw[thick,green] (6,5)  arc(90:-90:4);
			\draw[thick,green] (6,5)  arc(90:-90:2);
			\draw[thick,green] (6,1)  arc(90:-90:2);
			\node at (8.5,1) {\Large$C$};
		\end{tikzpicture}
	\end{minipage}
	\caption{}
	\label{figure3.1}
\end{figure}
In Theorem \ref{prove1}'s proof (Part II), it is demonstrated that assigning the same color is feasible for any three pairwise independent vertices within a biconnected outerplane graph possessing exactly one triangle.

\begin{theorem}\label{3.2}
Consider a biconnected outerplane graph $G$ with a single triangle. Given that any three independent vertices are precolored identically, the coloring admits an extension to a proper 3-coloring of $G$.	
\end{theorem}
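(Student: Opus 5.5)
We follow the strategy used for Theorem \ref{3.1}: enlarge $G$ by gadgets whose coloring forces $y_1,y_2,y_3$ to receive the same color, without creating new triangles, and then apply Theorem \ref{Grum and Ak}(ii). Let the precolored independent vertices $y_1,y_2,y_3$ appear in clockwise order on the outer cycle $H_G$, all with color $1$. Since $G$ is outerplane, all three lie on the outer face $F$. We work in the plane graph $G'$ obtained by adding, in the outer face, a new vertex $w$ adjacent to $y_1,y_2,y_3$. This keeps the graph planar, though no longer outerplanar, which is harmless because Theorem \ref{Grum and Ak} applies to plane graphs. Since $y_1,y_2,y_3$ are pairwise nonadjacent, no triangle through $w$ is created, so $G'$ still has exactly one triangle.

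\textbf{Reduction to a 4-face.} The edges $wy_1,wy_2$ together with the outer boundary arc from $y_1$ to $y_2$ bound a new face. To obtain a 4-face we use instead a single path: add a new vertex $u$ adjacent to $y_1$ and $y_2$. Then $y_1uy_2$ together with the vertex $w$ gives the 4-cycle $C=y_1uy_2w$. Draw $u$ and $w$ in the outer face so that $C$ bounds a face of $G''=G+\{u,w\}+\{y_1u,uy_2,wy_1,wy_2,wy_3\}$. We precolor $C$ by $y_1\mapsto1$, $u\mapsto2$, $y_2\mapsto1$, $w\mapsto3$. This is a proper coloring of the 4-face. By Theorem \ref{Grum and Ak}(ii) it extends to a 3-coloring of $G''$, since $G''$ has at most one triangle.

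\textbf{Pinning down $y_3$.} In this extension $y_3$ is adjacent to $w$, which has color $3$, so $y_3\in\{1,2\}$, and we still need $y_3=1$. To force this, add one further vertex $t$ in the outer face adjacent to $y_3$ and $u$. Then $w y_3 t u$ together with $y_1$ or $y_2$ forms a cycle. Because $d(y_i,y_j)\ge2$ in $G$, none of the added edges closes a triangle. Choose the embedding so that $y_1 u t y_3 w$ is a 5-face $C'$, and precolor it by $1,2,3,1,3$ along $y_1,u,t,y_3,w$. The special vertex is $u$ (color $2$ appears once), and the special edge $ty_3$ or $y_3w$ opposite $u$ consists of new edges, hence lies in no triangle. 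Theorem \ref{Grum and Ak}(ii) then extends the coloring of $C'$ to the whole augmented graph. Keeping the extra edge $uy_2$ is fine because $u$ has color $2$.

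\textbf{Main obstacle.} The step we expect to be hardest is making $y_2$ also receive color $1$ while keeping a single precolored face, since Theorem \ref{Grum and Ak}(ii) only extends colorings of one face. Our first attempt is to place all auxiliary vertices inside one 5-face of the form $y_1\,u\,y_2\,v\,y_3$ plus a chord, mirroring Theorem \ref{3.1}. Here the pattern $1,a,1,b,1$ is impossible on a 5-cycle, which is odd. So we instead use a 4-face $y_1uy_2w$ colored $1,2,1,3$, together with the edge $wy_3$ and a new vertex $z$ adjacent to $y_3$ and $u$. This creates the 4-cycle $w y_3 z u$; since $w$ and $u$ have colors $3$ and $2$, it forces $y_3\ne3$ and $z\ne2$. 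If $y_3$ comes out as $2$, we recolor by the 2-coloring (Kempe) argument available in outerplane graphs, swapping colors $1,2$ on the component of $y_3$ avoiding $y_1,y_2$.

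Justifying this final Kempe step is the delicate point. The key input is biconnectivity: the outer cycle separates the relevant arcs, so the component of $y_3$ in the $\{1,2\}$-subgraph cannot reach $y_1$ or $y_2$. This is exactly where the independence and outer-face position of the three vertices must be used.
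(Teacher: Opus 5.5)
You have a genuine gap at the ``pinning down $y_3$'' step, and the fallback does not close it. The embedding in which $C'=y_1uty_3w$ is a face never exists. The vertex $w$ has degree $3$, so the face occupying the angle $y_3wy_1$ at $w$ is bounded by $y_3wy_1$ and by the subpath from $y_1$ to $y_3$, avoiding $y_2$, of the boundary cycle of the face of $G$ containing $w$. That subpath has an interior vertex because $y_1\not\sim y_3$. Also, $u,t$ cannot lie in that region, since $u$ must also reach $y_2$, which is not on its boundary. Hence Theorem~\ref{Grum and Ak}(ii) cannot be applied to $C'$.

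Had $C'$ been a face, $y_2$ would already be forced to colour $1$, being adjacent to $u$ and $w$ of colours $2$ and $3$. So the obstacle you single out is not the real one. In the fallback, $wy_3zu$ is not a cycle: $u$ and $w$ are opposite on $C$ and nonadjacent. The vertex $z$ meets the same planarity obstruction as $t$. The Kempe step is unsupported: nothing in biconnectivity or outerplanarity stops the $\{1,2\}$-chain through $y_3$ from running through chords to $y_1$ or $y_2$.

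These gaps cannot be repaired, because the statement is false as written. Let $G$ have outer cycle $a\,p\,y_1\,b\,q\,y_2\,c\,r\,y_3$ and chords $ab,bc,ca$. This is a biconnected outerplane graph whose only triangle is $abc$, and $y_1,y_2,y_3$ are independent. But $b\sim y_1$, $c\sim y_2$ and $a\sim y_3$, so giving $y_1,y_2,y_3$ a common colour forbids that colour on all of $a,b,c$, which is impossible on a triangle. Your Kempe step fails visibly here: any colouring with $y_1=y_2=1$ and $y_3=2$ has $a=1$ and one of $b,c$ coloured $2$, so $y_3\,a\,b\,y_1$ or $y_3\,a\,c\,y_2$ is a $\{1,2\}$-path.

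The paper argues differently from you. It adds a single vertex $v$ adjacent to $y_1,y_2,y_3$ (your $w$) and applies Theorem~\ref{Hong2} of La et al.\ to the degree-$3$ vertex $v$, with no face gadgets. However, it checks $K_4^{'}$-freeness only for $G$, whereas Theorem~\ref{Hong2} is applied to $G+v$. In this example $G+v$ contains a $K_4^{'}$ centred at $v$, with subdivision vertices $y_1,y_2,y_3$ and triangle $abc$. So the paper's proof has the same hole. A correct version needs an extra hypothesis excluding this configuration, for instance that $G+v$ is $K_4^{'}$-free; with it, the reduction to Theorem~\ref{Hong2} is the natural route.
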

\begin{proof}
A biconnected outerplane graph $G$ containing exactly one triangle does not admit any $K_4^{'}$ subgraphs as induced subgraphs.  $y_1, y_2, y_3$ are the independent three vertices precolored by the same color. Let $F$ be the outer face of $G$. From $G$, we obtain a planar graph $G^1$ by inserting in the interior of $F$ a vertex $v$ and edges $vy_1, vy_2, vy_3$. Here $G^1$ can not increase the number of triangles, because $y_1, y_2, y_3$ are the independent three vertices. In $G^1$, the degree of $v$ is 3 and its neighbors are independent and colored the same.
    By Theorem \ref{Hong2}, the coloring propagates to a proper 3-coloring of $G$, ensuring chromatic consistency across all precolored vertices.

\end{proof}
\begin{theorem}\label{3.3}
For any biconnected outerplane graph $G$ containing exactly one triangle, if a partial 3-coloring is given such that: 1.Three independent vertices are pre-colored,
2. Two vertices share one color while the third uses a distinct color,
then this partial coloring can be extended to a proper 3-coloring of the entire graph $G$.
\end{theorem}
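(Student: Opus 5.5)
Following the pattern of Theorems \ref{3.1} and \ref{3.2}, I would add a small gadget inside the outer face $F$ of $G$ that forces the prescribed pattern, and then invoke Theorem \ref{Grum and Ak}(ii) on the resulting plane graph. Let the precolored independent vertices be $y_1,y_2,y_3$, indexed so that $y_1$ and $y_2$ receive color $1$ and $y_3$ receives color $2$. Relabel along the Hamiltonian cycle $H_G$ so that the three vertices appear in clockwise order. The construction depends only on which two vertices share a color, so by symmetry of the cyclic order we may assume the equally colored pair is $y_1,y_2$.

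\textbf{Construction.} Build $G'$ from $G$ by drawing, inside the outer face, a new vertex $u$ with edges $y_1u$ and $uy_2$, and a new vertex $w$ with edges $y_2w$ and $wy_3$. Also add the path $y_3 z y_1$ through a further new vertex $z$. This produces a $6$-cycle $C=y_1uy_2wy_3z$ bounding a new face. I prefer a $6$-cycle to the direct edge $y_1y_3$ used in Theorem \ref{3.1}, because the direct edge could create a triangle if $y_1$ and $y_3$ have a common neighbor. Every new edge joins a new vertex to one of the $y_i$, and the $y_i$ are pairwise nonadjacent. Hence any triangle through a new vertex $x$ would need an edge between two neighbors of $x$, which is impossible. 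So $G'$ still has exactly one triangle, and $G'$ is plane because everything is drawn in the outer face along the cyclic order. Now add a chord $y_1y_2$? No: $y_1,y_2$ are independent but may share a neighbor. Instead, split $C$ by the chord $uw$ into the $4$-cycle $C_1=uy_2w\,u$? That would be a triangle. So keep $C$ and handle it through the face on the far side, as in the next paragraph.

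\textbf{Extension.} The cleanest route is to mimic Theorem \ref{3.1} exactly and use a $5$-face, accepting the risk on the edge $y_1y_3$. Insert new vertices $u,v$ with edges $y_1u$, $uy_2$, $y_2v$, $vy_3$, and add the edge $y_1y_3$. Color the $5$-face $y_1uy_2vy_3$ by $1,2,1,3,2$; this is proper. The special vertex is $v$ (the unique vertex of color $3$), so the special edge is the opposite edge $y_1u$, which lies on no triangle. Theorem \ref{Grum and Ak}(ii) then extends this coloring to $G'$, and restricting to $G$ gives the required coloring.

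\textbf{Main obstacle.} The edge $y_1y_3$ creates a triangle whenever $y_1$ and $y_3$ have a common neighbor in $G$, and then $G'$ would have two triangles. To handle this, I would use the freedom in which pair of consecutive precolored vertices gets the chord. If $d(y_3,y_1)\ge 2$ fails to give $N(y_1)\cap N(y_3)=\emptyset$, I would place the direct edge between a different consecutive pair and relabel the $5$-face coloring accordingly. If every pair of the $y_i$ has a common neighbor, then outerplanarity (no $K_{2,3}$ minor) together with the single-triangle hypothesis sharply restricts the structure: the common neighbors lie on short paths around $H_G$. In that case I would fall back on the $6$-cycle gadget, triangulated into a $4$-face $y_1uy_2w'$ with a new vertex $w'$, and apply the $4$-face part of Theorem \ref{Grum and Ak}(ii) instead. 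Verifying that one of these options always avoids a new triangle is the step I expect to need a careful case check.
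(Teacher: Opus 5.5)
\textbf{The gap.} The gadget in your \emph{Extension} paragraph is the paper's Case~1 construction. The paper colours the $5$-face $y_1uy_2vy_3$ by $1,2,1,2,3$ with special edge $uy_2$; your colouring $1,2,1,3,2$ with special edge $y_1u$ works equally well. This gives a complete proof whenever the direct edge closes no triangle, i.e.\ whenever the odd-coloured vertex $y_3$ has no common neighbour with $y_1$, or (putting the direct edge on $y_2y_3$ instead) none with $y_2$. The direct edge can never join the equally coloured $y_1,y_2$, so the only bad case is $N(y_1)\cap N(y_3)\neq\emptyset\neq N(y_2)\cap N(y_3)$. In that case your proposal has no argument. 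The $6$-cycle of your \emph{Construction} paragraph is abandoned. The fallback $4$-face $y_1uy_2w'$ contains no precoloured vertex other than $y_1,y_2$, so extending a colouring of it by Theorem~\ref{Grum and Ak}(ii) imposes nothing on $y_3$, which may end up with colour $1$.

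\textbf{The statement is false in that case.} No case check can close this gap, because the statement has counterexamples exactly there. Take the $6$-cycle $y_1\,p\,y_3\,q\,y_2\,w$ as the outer face and add the chord $pq$. This is a biconnected outerplane graph whose only triangle is $py_3q$, and $y_1,y_2,y_3$ are pairwise nonadjacent. Precolour $y_1=y_2=1$ and $y_3=2$. Then $p$ (adjacent to $y_1,y_3$) and $q$ (adjacent to $y_2,y_3$) are both forced to colour $3$, but $pq$ is an edge, so the precolouring does not extend.

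\textbf{The paper's proof has the same hole.} In Case~1 it asserts that adding $y_1y_3$ creates no triangle using only $d(y_1,y_2),d(y_2,y_3)\ge 2$, ignoring common neighbours of $y_1$ and $y_3$. In Case~2 (where the odd-coloured vertex is called $y_2$) it meets precisely this configuration: $p$ adjacent to $q$, triangle $py_2q$. It observes that $y_1,y_3$ then cannot share a colour, and simply declares $p,q$ nonadjacent.

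So Theorem~\ref{3.3} needs an additional hypothesis. If you assume that $y_3$ has no common neighbour with at least one of $y_1,y_2$, your $5$-face argument proves the resulting statement. A version that also covers graphs where $y_3$ shares neighbours with both (while excluding the configuration above) would require a genuinely different argument.
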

\begin{proof}
	Assume that any precolored three independent vertices of $G$ are $y_1$, $y_2$ and $y_3$ in clockwise order and $d(y_1,y_2)\ge2$, $d(y_2,y_3)\ge2$. Since two independent vertices have the same color, we only need to discuss two cases (because $y_1$, $y_2$, and $y_3$ are equivalent, other similar cases are not discussed).	
	\setcounter{case}{0}
	\begin{case}
		When the vertices $y_1$ and $y_2$ are precolored by the same color.
	\end{case}
Assume $F$ is the unbounded face of $G$. By augmenting
$G$ with two new vertices $u$ and $v$ and edges $y_1u$, $uy_2$, $y_2v$, $vy_3$, $y_1y_3$ (Figure \ref{insert uv})
 , we derive a plane graph $G^2$. Here $G^2$ can not produce new triangles, because $d(y_1,y_2)\ge2$, $d(y_2,y_3)\ge2$.
	In $G^2$ color the vertices of the 5-face $C=y_1uy_2vy_3$ by 1,2,1,2,3, respectively. Observe that the special edge $uy_2$ is not contained in any triangle. By Theorem \ref{Grum and Ak} (ii), the 3-coloring of $C$ can be extended to a 3-coloring of $G^2$, which induces a 3-coloring of $G$ in which $y_1$, $y_2$ are colored by the same color, while the vertex $y_3$ is colored differently from them.
	\begin{figure}[h!]
		\centering
		\begin{minipage}[H]{0.4\linewidth}
			\begin{tikzpicture}[scale=0.5]
				\fill (-3,5) circle(4.5pt) ;
				\node [right=4pt] at (-3,5) {$y_1$};
				\fill (6,5) circle(4.5pt) ;
				\node [left=4pt] at (6,5) {1\ \ $y_1$};
				\fill (-3,3) circle(2.5pt) ;
				\node [left=4pt] at (-3,3) {};
				\fill (6,3) circle(2.5pt) ;
				\node [left=4pt] at (6,3) {};
				\fill (-3,1) circle(4.5pt) ;
				\node [right=4pt] at (-3,1) {$y_2$};
				\fill (6,1) circle(4.5pt) ;
				\node [left=4pt] at (6,1) {1\ \ $y_2$};
				\fill (-3,-1) circle(2.5pt) ;
				\node [left=4pt] at (-3,-1) {};
				\fill (6,-1) circle(2.5pt) ;
				\node [left=4pt] at (6,-1) {};
				\fill (-3,-3) circle(4.5pt) ;
				\node [right=4pt] (v1) at (-3,-3) {$y_3$};
				\fill (6,-3) circle(4.5pt) ;
				\node [left=4pt] (v1) at (6,-3) {3\ \ $y_3$};
				\node [right=4pt] (v3) at (-3,-4) {};
				\node [right=4pt] (v3) at (6,-4) {};
				\node [left=4pt] at (-3,6) {};
				\node [left=4pt] at (-3.9,5.68) {};
				\node [left=4pt] at (5.1,5.68) {};
				\node [left=4pt] (v2) at (-3.98,-3.5) {};
				\node [left=4pt] (v2) at (5.02,-3.5){};
				\node [left=4pt] at (-3.5,1) {};
				\node [left=4pt] at (5.5,1) {};
				\draw (-3,6)--(-3,5)--(-3,3)--(-3,1)--(-3,-1)--(-3,-3)--(-3,-4);
				\draw (6,6)--(6,5)--(6,3)--(6,1)--(6,-1)--(6,-3)--(6,-4);
				\draw  (-3,-3)--(-3.98,-3.5);
				\draw(6,-3) node (v5) {}--(5.02,-3.5);
				\draw  (-3,1)--(-3.5,1);
				\draw  (6,1) node (v7) {}--(5.5,1);
				\draw (-3,5)--(-3.9,5.68);
				\draw(6,5) node (v4) {}--(5.1,5.68);
				\node at (0,1) {\Large$F$};
				\fill (8,3) circle(4.5pt) ;
				\node[right=4pt] (v6) at (8,3) {$u$};
				\fill (8,-1) circle(4.5pt) ;
				\node[right=4pt] (v8) at (8,-1) {$v$};
				\draw[thick,red] (6,5)  arc(90:-90:4);
				\draw[thick,red] (6,5)  arc(90:-90:2);
				\draw[thick,red] (6,1)  arc(90:-90:2);
				\node at (8.5,1) {\Large$C$};
				\node[right=4pt] (v6) at (6.5,3) {2};
				\node[right=4pt] (v6) at (6.5,-1) {2};
			\end{tikzpicture}
		\end{minipage}
		\caption{}
		\label{insert uv}
	\end{figure}
	\begin{case}
		When the vertices $y_1$ and $y_3$ are precolored by the same color.
	\end{case}
	If $d(y_1,y_2)=2,\ d(y_2,y_3)=2$, let $p$ be the neighbor of $y_1$ and $y_2$, and $q$ be the neighbor of $y_2$ and $y_3$ and $F$ be the outer face of $G$.
 From the plane graph $G$, construct $G^3$ by inserting vertices $u,v$ and edges $y_1y_2$, $y_2y_3$, $y_1u$, $uv$, $vy_3$ (Figure \ref{insert uv1}). If $p$ and $q$ are adjacent vertices, then the triangle in $G$ is $T=py_2q$. In this structure, $y_1,\ y_3$ cannot have the same color, so $p$ and $q$ are nonadjacent vertices. From $G^3$, we obtain a plane graph $G^{3'}$ by deleting $p$ and $q$, color the 5-cycle $C'=y_1y_2y_3uv$ with 3 colors such that $y_1$, $y_3$ have the same color. By Theorem \ref{Grum and Ak} (ii), the 3-coloring of $C'$ can be propagated to a 3-oloring of $G^{3'}$, thereby enforcing that $y_1$ and $y_3$ share a common color while $y_2$ assumes a distinct one in the induced 3-coloring of $G$.

		\begin{figure}[htbp]
		\centering
		\begin{minipage}[H]{0.7\linewidth}
			\begin{tikzpicture}[scale=0.5]
				\fill (-3,5) circle(4.5pt) ;
				\node [right=4pt] at (-3,5) {$y_1$};
				\fill (6,5) circle(4.5pt) ;
				\node [left=4pt] at (6,5) {1\ \ $y_1$};
				\fill (15,5) circle(4.5pt) ;
				\node [left=4pt] at (15,5) {1\ \ $y_1$};
				\fill (-3,3) circle(2.5pt) ;
				\node [left=4pt] at (-3,3) {             $p$};
				\fill (6,3) circle(2.5pt) ;
				\node [left=4pt] at (6,3) {             $p$};
				\fill (-3,1) circle(4.5pt) ;
				\node [right=4pt] at (-3,1) {$y_2$};
				\fill (6,1) circle(4.5pt) ;
				\node [left=4pt] at (6,1) {2\ \ $y_2$};
				\fill (15,1) circle(4.5pt) ;
				\node [left=4pt] at (15,1) {2\ \ $y_2$};
				
				\fill (-3,-1) circle(2.5pt) ;
				\node [left=4pt] at (-3,-1) {             $q$};
				\fill (6,-1) circle(2.5pt) ;
				\node [left=4pt] at (6,-1) {             $q$};
				
				\fill (-3,-3) circle(4.5pt) ;
				\node [right=4pt] (v1) at (-3,-3) {$y_3$};
				\fill (6,-3) circle(4.5pt) ;
				\node [left=4pt] (v1) at (6,-3) {1\ \ $y_3$};
				\fill (15,-3) circle(4.5pt) ;
				\node [left=4pt] (v1) at (15,-3) {1\ \ $y_3$};
				\node [right=4pt] (v3) at (-3,-4) {};
				\node [right=4pt] (v3) at (6,-4) {};
				\node [left=4pt] at (-3,6) {};
				\node [left=4pt] at (-3.9,5.68) {};
				\node [left=4pt] at (5.1,5.68) {};
				\node [left=4pt] (v2) at (-3.98,-3.5) {};
				\node [left=4pt] (v2) at (5.02,-3.5){};
				\node [left=4pt] at (-3.5,1) {};
				\node [left=4pt] at (5.5,1) {};
				\draw (-3,6)--(-3,5)--(-3,3)--(-3,1)--(-3,-1)--(-3,-3)--(-3,-4);
				\draw (6,6)--(6,5)--(6,3)--(6,1)--(6,-1)--(6,-3)--(6,-4);
				\draw (15,6)--(15,5); \draw(15,-3)--(15,-4);
				\draw  (-3,-3)--(-3.98,-3.5);
				\draw(6,-3) node (v5) {}--(5.02,-3.5);
				\draw(15,-3) node (v5) {}--(14.02,-3.5);
				\draw  (-3,1)--(-3.5,1);
				\draw  (6,1) node (v7) {}--(5.5,1);
				\draw  (15,1) node (v7) {}--(14.5,1);
				\draw (-3,5)--(-3.9,5.68);
				\draw(6,5) node (v4) {}--(5.1,5.68);
				\draw(15,5) node (v4) {}--(14.1,5.68);
				\node at (0,1) {\Large$F$};
				\fill(9.1,3.5) circle(4.5pt);
				\fill(9.05,-1.6) circle(4.5pt);
				\node[right=4pt] at (9.1,3.5) {$u$};
				\node[right=4pt] at (9.05,-1.6) {$v$};
				\fill(18.1,3.5) circle(4.5pt);
				\fill(18.05,-1.6) circle(4.5pt);
				\node[right=4pt] at (18.1,3.5) {$u$};
				\node[right=4pt] at (18.05,-1.6) {$v$};
				\draw[thick,purple] (6,5)  arc(90:-90:4);
				\draw[thick,purple] (6,5)  arc(90:-90:2);
				\draw[thick,purple] (6,1)  arc(90:-90:2);
				\node at (17.5,1) {\Large$C'$};
				\draw[thick,purple] (15,5)  arc(90:-90:4);
				\draw[thick,purple] (15,5)  arc(90:-90:2);
				\draw[thick,purple] (15,1)  arc(90:-90:2);
				\node[right=4pt] (v6) at (7.6,3.5) {2};
				\node[right=4pt] (v6) at (7.55,-1.6) {3};
				\node[right=4pt] (v6) at (16.6,3.5) {2};
				\node[right=4pt] (v6) at (16.55,-1.6) {3};
			\end{tikzpicture}
		\end{minipage}
		\caption{}
		\label{insert uv1}
	\end{figure}
	
	If $d(y_1,y_2)=2,\ d(y_2,y_3)>2$, from $G$, we obtain a plane graph $G^4$ by inserting two vertices $u$ and $v$ and edges $y_1u$, $uy_3$, $y_2y_3$, $y_2v$, $y_1v$ (Figure \ref{insert uv2}) in $F$. Due to $d(y_1,y_2)=2,\ d(y_2,y_3)>2$, no new triangles can be formed. Color the 5-cycle $C''=y_1vy_2y_3u$ in the graph $G^4$ using 3 colors. Since the added edges are outside the triangle and no special edge is created. By Theorem \ref{Grum and Ak} (ii), the 3-coloring of $C''$ can be extended to a 3-coloring of $G^4$, which induces a 3-coloring of $G$ in which $y_1$, $y_3$ are colored by the same color, while the vertex $y_2$ is colored by different color from them. For $d(y_1,y_2)>2,\ d(y_2,y_3)=2$, prove it using the same approach as above.
	\begin{figure}[h!]
		\centering
		\begin{minipage}[H]{0.4\linewidth}
			\begin{tikzpicture}[scale=0.5]
				\fill (-3,5) circle(4.5pt) ;
				\node [right=4pt] at (-3,5) {$y_1$};
				\fill (6,5) circle(4.5pt) ;
				\node [left=4pt] at (6,5) {1\ \ $y_1$};
				\fill (-3,1) circle(4.5pt) ;
				\node [right=4pt] at (-3,1) {$y_2$};
				\fill (6,1) circle(4.5pt) ;
				\node [left=15pt] at (6.9,1) {2\ \ $y_2$};
				\fill (-3,-0.5) circle(2.5pt) ;
				\node [left=4pt] at (-3,-0.5) {};
				\fill (6,-0.5) circle(2.5pt) ;
				\node [left=4pt] at (6,-0.5) {};
				\fill (-3,-3) circle(4.5pt) ;
				\node [right=4pt] (v1) at (-3,-3) {$y_3$};
				\fill (6,-3) circle(4.5pt) ;
				\node [left=4pt] (v1) at (6,-3) {1\ \ $y_3$};
				\node [right=4pt] (v3) at (-3,-4) {};
				\node [right=4pt] (v3) at (6,-4) {};
				\node [left=4pt] at (-3,6) {};
				\node [left=4pt] at (-3.9,5.68) {};
				\node [left=4pt] at (5.1,5.68) {};
				\node [left=4pt] (v2) at (-3.98,-3.5) {};
				\node [left=4pt] (v2) at (5.02,-3.5){};
				\node [left=4pt] at (-3.5,1) {};
				\node [left=4pt] at (5.5,1) {};
				\fill (-3,3) circle(2.5pt) ;
				\node [left=4pt]at (-3,3) {};
				\fill (6,3) circle(2.5pt) ;
				\node [left=4pt]at (6,3) {};
				\draw (-3,6)--(-3,5)--(-3,3)--(-3,1)--(-3,-0.5)--(-3,-3)--(-3,-4);
				\draw (6,6)--(6,5)--(6,3)--(6,1)--(6,-0.5)--(6,-3)--(6,-4);
				\draw  (-3,-3)--(-3.98,-3.5);
				\draw(6,-3) node (v5) {}--(5.02,-3.5);
				\draw  (-3,1)--(-3.5,1);
				\draw  (6,1) node (v7) {}--(5.5,1);
				\draw (-3,5)--(-3.9,5.68);
				\draw(6,5) node (v4) {}--(5.1,5.68);
				\node at (0,1) {\Large$F$};
				\draw[thick,blue] (6,5)  arc(90:-90:4);
				\draw[thick,blue] (6,5)  arc(90:-90:2);
				\draw[thick,blue] (6,1)  arc(90:-90:2);
				\node at (8.5,1) {\Large$C''$};
				\fill(-3,-1.5) circle(2.5pt);
				\node at (-3,-1.5) {};
				\fill(10,1) circle(4.5pt);
				\fill(6,-1.5) circle(2.5pt);
				\node at (6,-1.5) {};
				\node[right=4pt] at (10,1) {$u$};
				\fill(8,3) circle(4.5pt);
				\node[right=4pt] at (6.5,3) {3};
				\node[right=4pt] at (8.5,1) {3};
				\node[right=4pt] at (8,3) {$v$};
			\end{tikzpicture}
		\end{minipage}
		\caption{}
		\label{insert uv2}
	\end{figure}
	
	If $d(y_1,y_2)>2,\ d(y_2,y_3)>2$, from $G$, we obtain a plane graph $G^5$ by inserting two vertices $u$ and $v$ and edges $y_1u$, $y_1y_2$, $y_2y_3$, $y_3v$, $uv$ (Figure \ref{insert uv3}) in $F$. Here $G^5$ still contain only one triangle. In $G^5$ color the vertices of the 5-cycle $C'''=y_1uvy_3y_2$ with three colors. Observe that the special edge will never contain in the triangle. By Theorem \ref{Grum and Ak} (ii), the 3-coloring of $C'''$ can be extended to a 3-coloring of $G^5$, which induces a 3-coloring of $G$ in which $y_1$, $y_3$ are colored by the same color, while the vertex $y_2$ is colored differently from them.
	\begin{figure}[h!]
		\centering
		\begin{minipage}[H]{0.4\linewidth}
			\begin{tikzpicture}[scale=0.5]
				\fill (-3,5) circle(4.5pt) ;
				\node [right=4pt] at (-3,5) {$y_1$};
				\fill (6,5) circle(4.5pt) ;
				\node [left=4pt] at (6,5) {1\ \ $y_1$};
				\fill (-3,2.5) circle(2.5pt) ;
				\node [left=4pt] at (-3,2.5) {};
				\fill (6,2.5) circle(2.5pt) ;
				\node [left=4pt] at (6,2.5) {};
				\fill (-3,1) circle(4.5pt) ;
				\node [right=4pt] at (-3,1) {$y_2$};
				\fill (6.0,1) circle(4.5pt) ;
				\node [left=15pt] at (6.5,1) {2\ \ $y_2$};
				\fill (-3,-0.5) circle(2.5pt) ;
				\node [left=4pt] at (-3,-0.5) {};
				\fill (6,-0.5) circle(2.5pt) ;
				\node [left=4pt] at (6,-0.5) {};
				\fill (-3,-3) circle(4.5pt) ;
				\node [right=4pt] (v1) at (-3,-3) {$y_3$};
				\fill (6,-3) circle(4.5pt) ;
				\node [left=4pt] (v1) at (6,-3) {1\ \ $y_3$};
				\node [right=4pt] (v3) at (-3,-4) {};
				\node [right=4pt] (v3) at (6,-4) {};
				\node [left=4pt] at (-3,6) {};
				\node [left=4pt] at (-3.9,5.68) {};
				\node [left=4pt] at (5.1,5.68) {};
				\node [left=4pt] (v2) at (-3.98,-3.5) {};
				\node [left=4pt] (v2) at (5.02,-3.5){};
				\node [left=4pt] at (-3.5,1) {};
				\node [left=4pt] at (5.5,1) {};
				\draw (-3,6)--(-3,5)--(-3,3.5)--(-3,1)--(-3,-0.5)--(-3,-3)--(-3,-4);
				\draw (6,6)--(6,5)--(6,3.5)--(6,1)--(6,-0.5)--(6,-3)--(6,-4);
				\draw  (-3,-3)--(-3.98,-3.5);
				\draw(6,-3) node (v5) {}--(5.02,-3.5);
				\draw  (-3,1)--(-3.5,1);
				\draw  (6,1) node (v7) {}--(5.5,1);
				\draw (-3,5)--(-3.9,5.68);
				\draw(6,5) node (v4) {}--(5.1,5.68);
				\node at (0,1) {\Large$F$};
				\draw[thick,orange] (6,5)  arc(90:-90:4);
				\draw[thick,orange] (6,5)  arc(90:-90:2);
				\draw[thick,orange] (6,1)  arc(90:-90:2);
				\node at (8.5,1) {\Large$C'''$};
				\fill(-3,3.5) circle(2.5pt);
				\node at (-3,3.5) {};
				\fill(6,3.5) circle(2.5pt);
				\node at (6,3.5) {};
				\fill(-3,-1.5) circle(2.5pt);
				\node at (-3,-1.5) {};
				\fill(6,-1.5) circle(2.5pt);
				\node at (6,-1.5) {};
				\fill(9.1,3.5) circle(4.5pt);
				\fill(9.05,-1.6) circle(4.5pt);
				\node[right=4pt] at (9.1,3.5) {$u$};
				\node[right=4pt] at (9.05,-1.6) {$v$};
				\node[right=4pt] (v6) at (7.6,3.5) {2};
				\node[right=4pt] (v6) at (7.55,-1.6) {3};
			\end{tikzpicture}
		\end{minipage}
		\caption{}
		\label{insert uv3}
	\end{figure}
\end{proof}

The combined implications of Theorems \ref{3.1}, \ref{3.2}, and \ref{3.3} establish Theorem \ref{prove1}. The demonstration of Theorem \ref{prove2} involves two steps: its initial step addresses the case of distinctly colored precolored vertices, which aligns with Theorem 3.4.

\begin{theorem}\label{3.4}
If $G$ is a 2-connected outerplane graph with two triangles and  $Q$ is a graph containing two triangles such that $G=Q-e$ for some edge $e\in Q$. Then
$Q$ is 3-colorable.
\end{theorem}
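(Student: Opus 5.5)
The plan is to reduce the statement to Theorem \ref{Grum and Ak} (i). That theorem needs only planarity and at most three triangles, and $Q$ has exactly two triangles by hypothesis. So the whole task is to show that $Q=G+e$ is planar, where $e=xy$ is an edge not in $G$.

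\textbf{Step 1 (planarity of $Q$).} Fix the outerplane embedding of $G$. Every vertex lies on the boundary of the outer face $F$, and since $G$ is 2-connected this boundary is the Hamiltonian cycle $H_G=(x_1,\ldots,x_n)$. Write $e=x_ix_j$. The face $F$ is homeomorphic to an open disc in the sphere whose boundary is $H_G$. Hence there is a simple arc from $x_i$ to $x_j$ whose interior lies entirely in $F$; concretely, draw it outside the circle on which the vertices sit. Adding this arc as the edge $e$ gives a crossing-free drawing of $Q$, so $Q$ is a plane graph. This is the only place where outerplanarity is really used. It is also the step I would check most carefully: the arc must avoid all existing edges, which holds because every edge of $G$ lies on $H_G$ or inside the disc bounded by $H_G$.

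\textbf{Step 2 (apply the earlier result).} The hypothesis says $Q$ contains two triangles. If it is read as ``exactly two'', then $e$ closes no new triangle. Either way $Q$ has at most three triangles. Theorem \ref{Grum and Ak} (i) then gives $\chi(Q)\le 3$, so $Q$ is 3-colorable.

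\textbf{Remarks on robustness.} Step 2 would go through even if $e$ created a third triangle, since Theorem \ref{Grum and Ak} (i) allows up to three. So the only real content is Step 1, and the main obstacle is stating the embedding argument cleanly.

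If one prefers to avoid invoking Theorem \ref{Grum and Ak}, there is a direct fallback by induction on $n$. An outerplanar graph has a vertex of degree at most $2$, in fact at least two such vertices that are not adjacent via a chord. With only one extra edge, $Q$ has a vertex $w$ of degree at most $2$ that is not an endpoint of $e$, provided $n\ge 4$. Delete $w$, 3-color the rest, and reinsert $w$ with a free color. The inductive step must handle the loss of 2-connectivity when $w$ is deleted. That is why I would use the planarity route as the primary argument.
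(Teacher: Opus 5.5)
Your main argument is correct and follows the paper's route: show that $G+e$ is planar, then apply Theorem \ref{Grum and Ak}(i). Your outer-face drawing of $e$ is a cleaner proof of planarity than the paper's case split on $d_G(u,v)$ with its minor-based contradiction. Two side remarks are off, though. First, the triangle bound really must come from the hypothesis on $Q$, since $e$ can create two new triangles: joining the two diamond vertices of a diamond gives $K_4$, so ``either way'' does not hold. Second, the fallback's degree claim is false: the hexagon $x_1\cdots x_6$ with chords $x_1x_3$, $x_4x_6$ plus $e=x_2x_5$ is the 3-regular triangular prism, which has exactly two triangles.
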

\begin{proof}
	Let $G$ be a biconnected outerplane graph with two triangles, and let $u$ and $v$ be any two vertices in $G$ that are colored differently and nonadjacent. Let $e=uv$, and $G+e=H$. If $d_{G}(u,v)=2$, then $H$ is a plane graph whose facial structure includes precisely three 3-cycles. If $d_{G}(u,v)>2$, then $H$ is a plane graph with two triangles. Otherwise,   $H$ is non-planar,  since it will contain subgraphs isomorphic to $K_5$ or $K_{3,3}$ minor. Removing edge $e$ would lead to the existence of subgraphs isomorphic to $K_4$ or $K_{2,3}$ minor, which contradicts the fact that $G$ is a biconnected outerplane graph. Therefore, by Theorem \ref{Grum and Ak}, the graph $H$ is 3-colorable.
\end{proof}
In proving Theorem \ref{prove2} (Step 2), given a biconnected outerplane graph containing exactly two triangles, it is established that two arbitrary independent vertices admit identical coloring.


Before we begin the proof, we first define some structures and homomorphism mappings and propose an algorithm to prove Theorem \ref{3.5} in the following.
\begin{definition}
	In a biconnected outerplane graph containing two triangles, if one of the triangles is a marginal triangle, we define the following two structures (a) and (b):
	\item[\rm(i)] If another triangle is also a marginal triangle and both triangles share a common vertex, and they are adjacent to the same 4-face or 5-face, we refer to such a structure as a ``$cake$'' structure (shown as Figure 9 (a)).
	\item[\rm(ii)] If another triangle is not a marginal triangle  and they are adjacent to the same 4-face or 5-face, we refer to such a structure as a ``$hamburger$'' structure (shown as Figure 9 (b)).
	\begin{figure}[htbp]
		\begin{minipage}[H]{0.8\linewidth}
			\begin{tikzpicture}[scale=1.0]
	\fill (-1.85,4) circle(2.5pt) ;
	\node at (-1.85,4) {};
	\fill (-2.5,3.25) circle(2.5pt) ;
	\node at (-2.5,3.25) {};
	\fill (-1,3.5) circle(2.5pt) ;
	\node at (-1,3.5) {};
	\fill (0,4) circle(2.5pt) ;
	\node at (0,4) {};
	\fill (0.65,3.3) circle(2.5pt) ;
	\node at (0.65,3.3) {};
	\draw (-2.5,3.25)--(-1.85,4)--(-1,3.5)--(-2.5,3.25)--(-1,2.2)--(0.65,3.3);
	\draw (-1,3.5)--(0,4)--(0.65,3.3)--(-1,3.5);
	\fill  (-1,2.2) circle(2.5pt) ;
	\node at (-1,2.2) {};
	\fill (4.15,4) circle(2.5pt) ;
	\node at (4.15,4) {};
	\fill (3.5,3.25) circle(2.5pt) ;
	\node at (3.5,3.25) {};
	\fill (5,3.5) circle(2.5pt) ;
	\node at (5,3.5) {};
	\fill (6,4) circle(2.5pt) ;
	\node at (6,4) {};
	\fill (6.65,3.3) circle(2.5pt) ;
	\node at (6.65,3.3) {};
	\draw (3.5,3.25)--(4.15,4)--(5,3.5)--(3.5,3.25)--(3.5,2.2)--(6.65,2.2)--(6.65,3.3);
	\draw (5,3.5)--(6,4)--(6.65,3.3)--(5,3.5);
	\fill  (3.5,2.2) circle(2.5pt) ;
	\node at (3.5,2.2) {};
	\fill  (6.65,2.2) circle(2.5pt) ;
	\node at (6.65,2.2) {};
	\fill (-6,0.5) circle(2.5pt) ;
	\node at (-6,0.5) {};
	\fill (-7,-0.25) circle(2.5pt) ;
	\node at (-7,-0.25) {};
	\fill (-5,-0.25) circle(2.5pt) ;
	\node at (-5,-0.25) {};
	\fill (-7,-1.25) circle(2.5pt) ;
	\node at (-7,-1.25) {};
	\fill (-5,-1.25) circle(2.5pt) ;
	\node at (-5,-1.25) {};
	\fill (-6,-2) circle(2.5pt) ;
	\node at (-6,-2) {};
	\draw (-6,0.5)--(-7,-0.25)--(-5,-0.25)--(-6,0.5);
	\draw (-7,-0.25)--(-7,-1.25)--(-5,-1.25)--(-5,-0.25);
	\draw (-7,-1.25)--(-6,-2)--(-5,-1.25);
	\fill (-1,0.5) circle(2.5pt) ;
	\node at (-1,0.5) {};
	\fill (-2,-0.25) circle(2.5pt) ;
	\node at (-2,-0.25) {};
	\fill (0,-0.25) circle(2.5pt) ;
	\node at (0,-0.25) {};
	\fill (-2,-1.25) circle(2.5pt) ;
	\node at (-2,-1.25) {};
	\fill (0,-1.25) circle(2.5pt) ;
	\node at (0,-1.25) {};
	\fill (-1,-2) circle(2.5pt) ;
	\node at (-1,-2) {};
	\fill (-2,-0.75) circle (2.5pt);
	\node at (-2,-0.75) {};
	\draw (-1,0.5)--(-2,-0.25)-- (-2,-0.75)--(-2,-1.25)--(-1,-2)--(0,-1.25)--(0,-0.25)--(-1,0.5) ;
	\draw (-2,-0.25)--(0,-0.25);
	\draw (-2,-1.25)--(0,-1.25);
	\fill (4,0.5) circle(2.5pt) ;
	\node at (4,0.5) {};
	\fill (3,-0.25) circle(2.5pt) ;
	\node at (3,-0.25) {};
	\fill (5,-0.25) circle(2.5pt) ;
	\node at (5,-0.25) {};
	\fill (5,-2.05) circle(2.5pt);
	\node at (5,-2.05) {};
	\fill (5,-1.15) circle(2.5pt);
	\node at (5,-1.15) {};
	\fill (3.3,-1.5) circle(2.5pt);
	\node at (3.3,-1.5) {};
	\draw (4,0.5)--(3,-0.25)--(3.3,-1.5)--(5,-2.05)--(5,-1.15)--(5,-0.25)-- (4,0.5);
	\draw (5,-2.05)--(3,-0.25)--(5,-0.25);
	\fill (9,0.5) circle(2.5pt) ;
	\node at (9,0.5) {};
	\fill (8,-0.25) circle(2.5pt) ;
	\node at (8,-0.25) {};
	\fill (10,-0.25) circle(2.5pt) ;
	\node at (10,-0.25) {};
	\fill (10,-2.05) circle(2.5pt);
	\node at (10,-2.05) {};
	\fill (8.3,-1.5) circle(2.5pt);
	\node at (8.3,-1.5) {};
	\fill (10,-0.85) circle(2.5pt);
	\node at (10,-0.85) {};
	\fill (10,-1.45) circle(2.5pt);
	\node at (10,-1.45) {};
	\draw (9,0.5)--(8,-0.25)--(8.3,-1.5)--(10,-2.05)--(10,-1.45)--(10,-0.85)--(10,-0.25)-- (9,0.5);
	\draw (10,-2.05)--(8,-0.25)--(10,-0.25);
	\node at (1.5,1.5) {$(a)$};
	\node at (1.5,-3) {$(b)$};
\end{tikzpicture}
		\end{minipage}
		\caption{(a) represents a ``cake'' structure, while (b) represents  ``hamburger'' structures (it should be noted that the hamburger structure is not limited to what is shown in (b).}
	\end{figure}
\end{definition}
Since the 4-face (5-face) is homomorphic to $K_3$, we can define the following types of homomorphism mappings.
\begin{definition}\label{hom mapping}
	The three vertices of $K_3$ are denoted by $x$, $y$, $z$ and we color them by $X$, $Y$, $Z$, respectively.
	\item[\rm(i)]  Let the vertices of a 4-face be $a$, $b$, $c$, $d$ in the clockwise direction, and we give six homomorphism mappings which needed in our following proof:
	\begin{itemize}
		\item $f_1$: \( f_1(a) = x \), \( f_1(b) = y \), \( f_1(c) = z \), \( f_1(d) = y \);
		\item $f_2$: \( f_2(a) = x \), \( f_2(b) = z \), \( f_2(c) = y \), \( f_2(d) = z \);
		\item $f_3$: \( f_3(a) = x \), \( f_3(b) = y \), \( f_3(c) = x \), \( f_3(d) = z \);
		\item $f_4$: \( f_4(a) = x \), \( f_4(b) = y \), \( f_4(c) = x \), \( f_4(d) = y \);
		\item $f_5$: \( f_5(a) = x \), \( f_5(b) = z \), \( f_5(c) = x \), \( f_1(d) = z \);
		\item $f_6$: \( f_1(a) = x \), \( f_1(b) = z \), \( f_1(c) = x \), \( f_1(d) = y \).
	\end{itemize}
	\item[\rm(ii)]Let the vertices of a 5-face be $a$, $b$, $c$, $d$, $e$ in the clockwise direction, and we give three homomorphism mappings:
	\begin{itemize}
		\item $F_1$: \( F_1(a) = x \), \( F_1(b) = z \), \( F_1(c) = y \), \( F_1(d) = z \), \( F_1(e) = y \);
		\item $F_2$: \( F_2(a) = x \), \( F_2(b) = z \), \( F_2(c) = y \), \( F_2(d) = x \), \( F_2(e) = y \);
		\item $F_3$: \( F_3(a) = y \), \( F_3(b) = z \), \( F_3(c) = x \), \( F_3(d) = z \), \( F_3(e) = x \).
	\end{itemize}
\end{definition}
\begin{remark}\label{remark 3.1}
	When using these homomorphism mappings, according to the following orders:
	\item[\rm(i)] $f_1>f_2>f_3>f_4>f_5>f_6$.
	\item[\rm(ii)] $F_1>F_2>F_3$.
\end{remark}
\begin{operation}\label{hom operation}
A biconnected outerplane graph $G$ of order  $n\ge6$ with only two triangles and a definite H-embedding on the plane. The outer face boundary vertices of the H-embedding exhibit a clockwise cyclic order of $x_1\ldots x_n.$  Due to observation \ref{observation2}, $H$ is obtained by iteratively adding chords and any face in the graph $H$ has length no more than 5. Let $u=x_i$ and $v=x_j$ (where $1<i<j<n$) be two independent vertices in the graph $H$ which have a same color. Let the face containing the vertex $u$ be denoted as $R_1$, and perform a homomorphic mapping $f_i\ (1<i<6)$ or $F_j\ (1<j<3)$ on it. Denote the next face, where two vertices of an edge have been processed, as $R_2$. Continue this mapping process until reach the face containing the vertex $v$, denoting this face as $R_k$.
\end{operation}
\begin{breakablealgorithm}
	\caption{Color adjusting algorithm based on homomorphic mapping}
	\label{algorithm1}
	\renewcommand{\algorithmicrequire}{\textbf{Input:}}
	\renewcommand{\algorithmicensure}{\textbf{Output:}}
	\begin{algorithmic}[1]	
		\REQUIRE A biconnected outerplane graph $H$ of order $n\ge6$ which has two triangles with a definite H-embedding on the plane. The outer face boundary vertices of the H-embedding exhibit a clockwise cyclic order of $x_1\ldots x_n.$ The two independent vertices $x_i$ and $x_j$ (where $1\leq i<j\leq n$) have the same color and all faces from $R_1$ to $R_k$ of $H$  after Operation \ref{hom operation}.
		\ENSURE $\psi(u)=\psi(v)\ (\psi:V(H)\rightarrow\{X,Y,Z\})$
		\STATE Perform a homomorphic mapping $f_i$ ($i=1,2$) or $F_1$ on the face $R_1$ containing vertex $u$, coloring it with the corresponding color. Then, continue to perform the homomorphic mapping on the next face that already has two vertices of one edge been colored, ensuring that it receives a proper coloring. The sequence of homomorphic mappings follows Remark \ref{3.1}, until reaching face $R_k$.
		\IF{$\psi(u)\notin\psi(N_{R_k}(v))$}
		\STATE $\psi(u)=\psi(v)$
		\ELSE
		\STATE Adjust the homomorphic mapping received by face $R_{k-1}$ based on its type, ensuring that $\psi(u)\notin\psi(N_{R_k}(v))$.
		\ENDIF
		\RETURN $\psi(u)=\psi(v)$
	\end{algorithmic}
\end{breakablealgorithm}
\begin{remark}
	\item[\rm(i)] The graph $H$ in the Algorithm \ref{algorithm1} satisfies the conditions of $H$ in Observation \ref{observation2}.
	\item[\rm(ii)] Because the operations to change the homomorphic mappings are finite and can be completed in linear time, the algorithm can process in linear time.
\end{remark}
\textbf{Explanation on Algorithm 1}

Due to the order of homomorphisms followed in Remark \ref{remark 3.1}, if the face $R_{k-1}$ is a 4-face, then there exists a homomorphism $f_1:R_{k-1}\rightarrow K_3$ or a homomorphism $f_2:R_{k-1}\rightarrow K_3$. The coloring scheme under the mapping $f_1$ (or $f_2$) is recorded as $\psi_1$ (or $\psi_2$). When the coloring of $R_{k-1}$ is obtained under the homomorphism $f_1$, if $\psi_1(u)=X$, then $v$ can also be colored by $X$, the algorithm ends. If $\psi_1(u)=Y$, then by changing the homomorphism on face $R_{k-1}$ from $f_1$ to $f_3$, $v$ can be colored by $Y$, the algorithm ends. If $\psi_1(u)=Z$, then by changing the homomorphism on face $R_{k-1}$ from $f_1$ to $f_4$, $v$ can be colored by $Z$, and the conclusion holds; the algorithm ends. When the coloring of $R_{k-1}$ is obtained under the homomorphism $f_2$, if $\psi_2(u)=X$, then $v$ can also be colored by $X$, the algorithm ends. If $\psi_2(u)=Y$, then by changing the homomorphism on face $R_{k-1}$ from $f_2$ to $f_5$, $v$ can be colored by $Y$, the algorithm ends. If $\psi_2(u)=Z$, then by changing the homomorphism on face $R_{k-1}$ from $f_2$ to $f_6$, $v$ can be colored by $Z$, the algorithm ends; if the face $R_{k-1}$ is a 5-face, then there exists a homomorphism $F_1:R_{k-1}\rightarrow K_3$. The coloring scheme under the mapping $F_1$ is recorded as $\psi_3$. When the face $R_{k-1}$ is colored under the homomorphism $F_1$, if $\psi_3(u)=X$, then $v$ can also be colored by $X$, the algorithm ends. If $\psi_3(u)=Y$, then by changing the homomorphism on face $R_{k-1}$ from $F_1$ to $F_3$, $v$ can be colored by $Y$, the algorithm ends. If $\psi_3(u)=Z$, then by changing the homomorphism on face $R_{k-1}$ from $F_1$ to $F_2$, $v$ can be colored by $Z$, the algorithm ends. Since the number of times the homomorphism can be changed is limited, the algorithm can run in linear time.
\begin{theorem}\label{3.5}
For a biconnected outerplane graph $G$ with two triangles, any same-color assignment to two independent vertices admits an extension to a 3-coloring of
$G$.
\end{theorem}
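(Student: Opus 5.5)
The plan is to work with the weak dual of $G$, as in Operation~\ref{hom operation} and Algorithm~\ref{algorithm1}, and to colour face by face along the unique path of inner faces from a face containing $u$ to a face containing $v$. Faces off this path are handled at the end.

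First, by Observation~\ref{observation2}, I add chords inside every face of length at least $6$. This gives a biconnected outerplane graph $H\supseteq G$ whose inner faces have length $3$, $4$ or $5$, with only the two original triangles as $3$-faces. Chords must not be drawn at $u$ toward $v$, since $u,v$ have to remain nonadjacent; this is possible because a face of length $\ge 6$ can always be split into $4$- and $5$-faces avoiding one prescribed diagonal. A $3$-colouring of $H$ restricts to one of $G$, so it suffices to colour $H$.

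The weak dual of $H$ is a tree. I choose a face $R_1\ni u$ and a face $R_k\ni v$ minimising the dual distance, and let $R_1,\dots,R_k$ be the dual path. I then split into cases.

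\emph{Case $k=1$.} Here $u,v$ lie on a common face $R_1$. That face is not a triangle, because $u,v$ are nonadjacent, so $R_1$ is a $4$- or $5$-cycle. A cycle of length $\ge 4$ with two nonadjacent vertices can always be properly $3$-coloured with those two vertices receiving the same colour: use $f_4$ for a $4$-face, and for a $5$-face choose among $F_1,F_2,F_3$ the one giving $u,v$ equal images.

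\emph{Case $k\ge 2$.} Consecutive faces $R_i,R_{i+1}$ share a chord $e_i$. I colour $R_1$ so that $u$ gets the prescribed colour, and then propagate. The key transfer lemma is the following. Given a proper colouring of the chord $e_i$, a $4$- or $5$-face $R_{i+1}$ admits at least two colourings that differ on its next chord $e_{i+1}$ (or on $v$ if $i+1=k$). Moreover, by choosing among $f_1,\dots,f_6$ or $F_1,F_2,F_3$, any vertex of $R_{i+1}$ not on $e_i$ can be given any colour compatible with its neighbours on the face. A triangle face instead forces its third vertex.

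The last step is exactly the adjustment in the Explanation of Algorithm~\ref{algorithm1}. If, after greedy colouring, the colour of $u$ appears on $N_{R_k}(v)$, I recolour the last non-triangle face on the path, switching between the mappings listed there, so that the two ends of $e_{k-1}$ avoid $\psi(u)$ near $v$. Then $v$ can take $\psi(u)$.

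\emph{Triangle-only paths.} Since $H$ has only two triangles, the only situation with no non-triangle face on the path is $k=2$ with $R_1,R_2$ both triangles sharing an edge. That is a diamond $D$ with $u,v$ its diamond vertices, and they are forced to be equal, which is exactly what is required.

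\emph{Faces off the path.} Each remaining face is attached to the already coloured part along one chord (tree structure). A cycle of length $\ge 3$ with one precoloured edge always extends: a triangle takes the third colour, and a $4$- or $5$-cycle extends trivially. So the colouring extends to all of $H$.

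The main obstacle is the adjustment step when the faces near $v$ are triangles. For example, $R_k$ may be a triangle or a marginal triangle, or $R_{k-1},R_k$ may form the cake or hamburger configurations. Then the colours of $N_{R_k}(v)$ are forced by $e_{k-1}$, and the freedom must be pushed back to an earlier non-triangle face $R_j$. My first attempt would be to show that a $4$- or $5$-face followed by at most two triangles can still realise every colouring of the final edge up to the one constraint. A separate check would then treat the cake and hamburger structures, using the two spare mappings ($f_5,f_6$ or $F_2,F_3$) to reach the needed colour at $v$.
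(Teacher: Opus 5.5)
\textbf{There is a genuine gap.} It sits at the step you flagged as the main obstacle, and it cannot be closed, because the statement is false as written.

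Your adjustment step needs a non-triangle face among $R_1,\dots,R_{k-1}$ whose freedom reaches the last chord $e_{k-1}$. Your ``triangle-only paths'' paragraph only excludes the case where \emph{every} face of the path is a triangle. Suppose instead that $R_1,R_2$ are the two triangles and form a diamond, with $u$ the vertex of $R_1$ off $e_1$ (minimality of the dual path forces this), and $R_3=R_k$ is a $4$-face. Then the endpoint of $e_2$ that is the second diamond vertex is forced to carry $\psi(u)$. Recolouring $R_k$ does not touch $e_{k-1}$, and there is no earlier non-triangle face to recolour. So if $v$ is adjacent to that endpoint, $v$ can never receive $\psi(u)$.

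The mirror configuration defeats your proposed repair lemma in the same way. There $u$ lies in a $4$-face and is adjacent to one diamond vertex, and $v$ is the other diamond vertex. The single constraint the $4$-face places on $e_1$ is that the endpoint adjacent to $u$ avoids $\psi(u)$. That is exactly the constraint that would have to be violated.

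Concretely, take the outer cycle $u\,b\,a\,v\,w\,c$ with chords $bc$ and $ac$.
\begin{itemize}
\item This is a biconnected outerplane graph whose only triangles are $ubc$ and $bac$, and $u,v$ are independent.
\item In every proper $3$-colouring, $u$ and $a$ both receive the colour missing from $bc$.
\item Since $av$ is an edge, $u$ and $v$ always get different colours.
\end{itemize}

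The paper's proof fails on the same graph. It falls under Case~2: $ubc$ is marginal with $d(u)=2$, $bac$ is striped, $v$ lies in the $4$-face $avwc$, and a diamond is present. There the paper colours the other diamond vertex $w'=a$ with the colour of $v$, which is improper since $av\in E(G)$.

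So an extra hypothesis is needed, for instance that $G$ contains no diamond, as in condition (a) of Theorem~\ref{prove2}. Condition (b) there does not exclude this example, since $\{u,v\}$ meets $D$ only in $u$. Under a diamond-free hypothesis, two triangles are never consecutive on your dual path, and your propagation scheme becomes a reasonable plan. The transfer through a single triangle would still need to be verified rather than asserted.
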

\begin{proof}
	Let $G$ be a biconnected outerplane graph with only two triangles and let $u$ and $v$ be the two nonadjacent vertices which have the same color, by Observation \ref{observation2} we add some chords to get a new graph $H$ and ensure that $u$ and $v$ are still nonadjacent vertices in $H$. Because $G$ is a subgraph of $H$, we only need to consider the coloring of $H$.  We will divide the graph $H$ into six cases based on the different types of triangles in the graph, and then discuss the following four cases regarding the positions of vertices $u$ and $v$:\\
	1. $u$ and $v$ are located in two different triangles.\\
	2. $u$ is in a triangle and $v$ is in a 4-face (or 5-face).\\
	3. $u$ and $v$ are in different 4-faces or 5-faces.\\
	4. $u$ and $v$ are in the same 4-face or 5-face.
	\setcounter{case}{0}
	\begin{case}\label{case1}
		The graph $H$ contains two marginal triangles.
	\end{case}
	When the vertices $u$ and $v$ are in two marginal triangles respectively, and they are the 2-degree vertices of the triangles. If a ``cake'' structure appears in $H$, we can have a three coloring on it, and then according to the conclusions of Theorem \ref{Grum and Ak} (ii), we obtain a three coloring for the remaining part of the graph, whether it consists of four or five face within the ``cake'' structure (Figure \ref{figure 9}). If there is no cake structure, due to every face of $H$ has length no more than 5, we can give homomorphisms from every face of $H$ into $K_3$. The three vertices of $K_3$ are denoted by $x$, $y$ and $z$ and we color them by $X$, $Y$, $Z$, respectively. Let the vertices of any 4-face (5-face) in the graph $H$ be $a$, $b$, $c$, $d$ ($a$, $b$, $c$, $d$, $e$) in the clockwise direction, and the face containing vertices $u$ and $v$ is colored by Definition \ref{hom mapping} and Operation \ref{hom operation}. If $\phi(u)=\phi(v)$, we complete the proof, otherwise we can use Algorithm \ref{algorithm1} to change the colors of the neighbors of $v$ by altering the mapping of $R_k$, thereby allowing $\phi(u)\notin\phi(N_{R_k}(v))$. After $u$ and $v$ are colored by the same color, the remaining uncolored faces can continue to follow the homomorphism mapping defined in Definition \ref{hom mapping}, the resulting entire graph $H$ can be 3-colored.
	\begin{remark}
		Because biconnected outerplane graphs do not contain subgraphs isomorphic to $K_4$ or $K_{2,3}$ minor, in any uncolored face, at most one edge has both endpoints colored by two colors. Since the length of any face in the graph $H$ is at most 5, every face is 3-colorable. Therefore, the uncolored faces can still be colored using the aforementioned homomorphism method, resulting in a proper 3-coloring of $H$.
	\end{remark}
	\begin{figure}[htbp]
		\centering
		\begin{minipage}[H]{0.6\linewidth}
			\begin{tikzpicture}[scale=0.9]
			\fill (-1.85,4) circle(2.5pt) ;
			\node at (-1.85,4) {};
			\fill (-2.5,3.25) circle(2.5pt) ;
			\node at (-2.5,3.25) {};
			\fill (-1,3.5) circle(2.5pt) ;
			\node at (-1,3.5) {};
			\fill (0,4) circle(2.5pt) ;
			\node at (0,4) {};
			\fill (0.65,3.3) circle(2.5pt) ;
			\node at (0.65,3.3) {};
			\fill(-1,2.2)circle(2.5pt) ;
			\node at (-1,2.2) {};
			\draw (-2.5,3.25)--(-1.85,4)--(-1,3.5)--(-2.5,3.25) node (v3) {}--(-1,2.2) node (v1) {}--(0.65,3.3);
			\draw (-1,3.5)--(0,4)--(0.65,3.3) node (v2) {}--(-1,3.5);
			
			\node at (-1.9,4.55) {$u$};
			\node at (0,4.55) {$v$};
			\fill (3.5,3.25) circle(2.5pt) ;
			\node at (3.5,3.25) {};
			\fill (5,3.5) circle(2.5pt) ;
			\node at (5,3.5) {};
			\fill (6,4) circle(2.5pt) ;
			\node at (6,4) {};
			\fill (6.65,3.3) circle(2.5pt) ;
			\node at (6.65,3.3) {};
			\fill(5,2.2)circle(2.5pt) ;
			\node at (5,2.2) {};
			\draw (5,3.5)--(3.5,3.25)--(5,2.2) node (v1) {}--(6.65,3.3);
			\draw (3.5,3.25) node (v11) {}--(5,3.5)--(6.65,3.3)--(6,4)--(5,3.5);
			\draw (5,3.5)--(6,4)--(6.65,3.3) node (v2) {}--(5,3.5);	
			\node at (6,4.55) {$v$};
			\node (v4) at (-2.95,2.5) {};\node (v4‘) at (3.05,2.5) {};
			\node (v5) at (-2.95,1.35) {};\node (v5’) at (3.05,1.35) {};
			\node (v6) at (-2.15,0.25) {};\node (v6‘) at (3.85,0.25) {};
			\node (v7) at (-0.7,-0.25) {};\node (v7’) at (5.3,-0.25) {};
			\node (v9) at (1.3,1.15) {};\node (v9‘) at (7.3,1.15) {};
			\node (v10) at (1.2,2.35) {};\node (v10’) at (7.2,2.35) {};
			\node (v8) at (0.6,0.2) {};\node (v8‘) at (6.6,0.2) {};
			\draw  plot[smooth, tension=.7] coordinates {(v3) (v4) (v5) (v6) (v7) (v8) (v9) (v10) (0.65,3.3)};
			\draw  plot[smooth, tension=.7] coordinates {(v11) (v4‘) (v5’) (v6‘) (v7’) (v8‘) (v9‘) (v10’) (v2)};
		\end{tikzpicture}
		\end{minipage}
		\caption{}
		\label{figure 9}
	\end{figure}

							For the case where the vertices $u$ and $v$ are in two marginal triangles, but at least one of them is not a 2-degree vertex, as well as the other three cases for the positions of $u$ and $v$, we use the same method to prove it. Let $w$ be the 2-degree vertex in another marginal triangle, and let $H^1=H-w$. Since $H^1$ is a biconnected outerplane graph containing exactly one triangle, Theorem \ref{Hong1} implies that $H^1$ is 3-colorable, hence $H$ is 3-colorable. Thus the conclusion holds.
							
							\begin{case}\label{case2}
								The graph $H$ contains a marginal triangle and a striped triangle.
							\end{case}
							When the vertices $u$ and $v$ are in a marginal triangle and a striped triangle, respectively. If there is a diamond structure in the graph, and the common neighbors of $u$ and $v$ are denoted as $u'$ and $v'$ (note that in this case, $u$ and $v$ can only be diamond vertices), let $H^2=H-u'v'$. Then, the diamond structure in graph $H$ becomes a 4-face in $H^2$. We color this 4-face by the three colors such that $u'$ and $v'$ receive different colors. By Theorem \ref{Grum and Ak}, $H^2$ is 3-colorable, then $H$ is also 3-colorable (Figure \ref{figure10}). Thus, the conclusion holds; If there is no diamond structure, and $u$ is a 2-degree vertex in the marginal triangle, and if $H$ contains a hamburger structure, we first color the hamburger structure with three colors such that the vertices $u$ and $v$ have the same color. Let $H^3=H-u$. By Theorem \ref{Grum and Ak}, $H^3$ is 3-colorable, then $H$ is also 3-colorable (Figure \ref{figure11}); if there is no hamburger structure, then the proof method is similar to the Case \ref{case1} where $u$ and $v$ are in the two triangles respectively and there is no cake structure. Using Definition \ref{hom mapping} and Operation \ref{hom operation} in conjunction with Algorithm \ref{algorithm1}, we ensure that $u$ and $v$ are colored by the same color, and $H$ is 3-colorable. If there is no diamond structure and neither $u$ nor $v$ is a 2-degree vertex, then let $w$ be the 2-degree vertex in the marginal triangle. Let $H^4=H-w$. By the conclusion of Theorem \ref{Hong1}, $H^4$ is 3-colorable, then $H$ is also 3-colorable.
							
							\begin{figure}[h!]
								\centering
								\begin{minipage}[H]{0.6\linewidth}
									\begin{tikzpicture}[scale=0.6]
										\draw  (0,0) node  {} circle (100pt);
										\draw  (12,0) node  {} circle (100pt);
										\fill (0,3.5) circle (2.5pt);	\fill (12,3.5) circle (2.5pt);	
										\node [above=4pt] at (0,3.5) {$\Large u$};\node[above=4pt] at (12,3.5) {$\Large u$};
										\fill (-2.55,2.4) circle (2.5pt);	\fill (9.45,2.4) circle (2.5pt);	
										\node[left=4pt] at (-2.55,2.4) {$\Large u'$};\node[left=4pt] at (9.45,2.4) {$\Large u'$};
										\fill (2.55,2.5) circle (2.5pt);	\fill (14.55,2.5) circle (2.5pt);	
										\node[right=4pt] at (2.55,2.5) {$\Large v'$};\node[right=4pt] at (14.55,2.5) {$\Large v'$};
										\fill (0,1.25) circle (2.5pt);	\fill (12,1.25) circle (2.5pt);	
										\node[below=4pt] at (0,1.25) {$\Large v$};\node[below=4pt] at (12,1.25) {$\Large v$};
										\draw  (-2.55,2.4)-- (0,1.25)--(2.55,2.5) ;
										\draw  (9.45,2.4)-- (12,1.25)--(14.55,2.5) ;
										\draw[thick,orange] (-2.55,2.4)--(2.55,2.5);
									\end{tikzpicture}
								\end{minipage}
								\caption{}
								\label{figure10}
							\end{figure}
							When $u$ is in a triangle and $v$ is in a $4$-face (or $5$-face). If $u$ is in a striped triangle, let $w$ be the 2-degree vertex in the marginal triangle, set $H^5=H-w$, by Theorem \ref{Hong1}, $H^5$ is 3-colorable, thus $H$ is also 3-colorable; if $u$ is in a marginal triangle and it is not a 2-degree vertex, then let $w$ be the 2-degree vertex of the marginal triangle, set the graph $H^6=H-w$, by Theorem \ref{Hong1}, the graph $H^6$ is 3-colorable, thus the graph $H$ is also 3-colorable; if $d(u)=2$ and there exists a diamond structure in the graph, let $w'$ be another diamond vertex and color it with the same color as $v$, set the graph $H^7=H-u$, again by Theorem \ref{Hong1}, the graph $H^7$ is 3-colorable, thus the graph $H$ is also 3-colorable; if there is no diamond structure in $H$, which can be 3-colored when $u$ and $v$ are precolored by the same color through Definition \ref{hom mapping} and Operation \ref{hom operation} in conjunction with Algorithm \ref{algorithm1}.
							\begin{figure}[h!]
								\centering
								\begin{minipage}[H]{0.65\linewidth}
									\begin{tikzpicture}[scale=0.6]
										\draw  (0,0) node  {} circle (100pt);
										\fill (0,3.5) circle (2.5pt);	
										\node [above=4pt] at (0,3.5) {$\Large u$};
										\node at (0,-3.5) {};\node (v4) at (12,-3.5) {};
										\fill (-2.55,2.4) circle (2.5pt);	\fill (9.45,2.4) circle (2.5pt);	
										\node[left=4pt] at (-2.55,2.4) {};\node[left=4pt] (v1) at (9.45,2.4) {};
										\fill (2.45,2.5) circle (2.5pt);	\fill (14.55,2.5) circle (2.5pt);	
										\node[right=4pt] at (2.45,2.5) {};\node[right=4pt] (v7) at (14.55,2.5) {};
										\node at (-2.45,-2.5) {};\node (v3) at (9.55,-2.5) {};
										\node at (2.5,-2.5) {}; \node (v5) at (14.5,-2.5) {};
										\fill (3.5,0) circle (2.5pt);	\fill (15.5,0) circle (2.5pt);	
										\node at (-3.5,0) {};\node (v2) at (8.5,0) {};
										\node at (3.5,0) {};\node (v6) at (15.5,0) {};
										\draw  plot[smooth, tension=.7] coordinates {(9.45,2.4) (v2) (v3) (v4) (v5) (v6) (14.55,2.5)};
										\draw (-3.35,1)--(3.5,0)--(-2.55,2.4)--(2.45,2.5);
										\draw(8.7,1)--(15.5,0)--(9.45,2.4)--(14.55,2.5);
										\fill (-3.35,1) circle (2.5pt);	\fill (8.7,1) circle (2.5pt);	
										\node at (-3.35,1) {};\node at (8.7,1) {};
										\fill (3.2,1.4) circle (2.5pt);	\fill (15.1,1.4) circle (2.5pt);	
										\node at (3.2,1.4) {};\node at (15.1,1.4) {};
									\end{tikzpicture}
								\end{minipage}
								\caption{}
								\label{figure11}
							\end{figure}
							
							When $u$ and $v$ are in different 4-faces (5-faces) or in the same 4-faces (5-faces), we can similarly denote $w$ as the 2-degree vertex in the marginal triangle, set $H^8=H-w$, by Theorem \ref{Hong1}, the graph $H^8$ is 3-colorable, thereby the graph $H$ is also 3-colorable.
							\begin{case}
								The graph $H$ contains a marginal and an internal triangle.
							\end{case}
							When the vertices $u$ and $v$ are in a marginal and an internal triangle, respectively. If there is a diamond structure in the graph, and the common neighbors of $u$ and $v$ are denoted as $u'$ and $v'$ (note that in this case, $u$ and $v$ can only be diamond vertices), let $H^2=H-u'v'$. Then, the diamond structure in the graph $H$ becomes a 4-face in $H^2$. We color this 4-face with three colors such that $u'$ and $v'$ receive different colors. By Theorem \ref{Grum and Ak}, $H^2$ is 3-colorable, then $H$ is also 3-colorable. Thus, the conclusion holds; If there is no diamond structure, and $u$ is a 2-degree vertex in the marginal triangle, and moreover the graph contains a hamburger structure, we first color the hamburger structure with the three colors such that the vertices $u$ and $v$ have the same color. Let $H^3=H-u$. By Theorem \ref{Grum and Ak}, $H^3$ is 3-colorable, then $H$ is also 3-colorable; if there is no hamburger structure, then the proof method is similar to that of Case \ref{case1} where $u$ and $v$ are in the two triangles respectively and there is no cake structure. Using Definition \ref{hom mapping} and Operation \ref{hom operation} in conjunction with Algorithm \ref{algorithm1}, we ensure that $u$ and $v$ have the same color, and $H$ is 3-colorable. If there is no diamond structure and neither $u$ nor $v$ is a 2-degree vertex, then let $w$ be the 2-degree vertex in the marginal triangle. Let $H^4=H-w$. By the conclusion of Theorem \ref{Hong1}, $H^4$ is 3-colorable, then $H$ is also 3-colorable.
							
							When $u$ is in a triangle and $v$ is in a 4-face (or 5-face). If the vertex $u$ is in an internal triangle, let $w$ be the 2-degree vertex in the marginal triangle, set $H^5=H-w$, by Theorem \ref{Hong1}, $H^5$ is 3-colorable, thus $H$ is also 3-colorable; if the vertex $u$ is in a marginal triangle and it is not a 2-degree vertex, then let $w$ be the 2-degree vertex of the marginal triangle, set $H^6=H-w$, by Theorem \ref{Hong1}, the graph $H^6$ is 3-colorable, thus the graph $H$ is also 3-colorable; if its degree is 2 and there exists a diamond structure in the graph, let $w'$ be another diamond vertex and color it with the same color as $v$, set $H^7=H-u$, by Theorem \ref{Hong1}, the graph $H^7$ is 3-colorable, thus the graph $H$ is also 3-colorable; if there is no diamond structure in the graph, using Definition \ref{hom mapping} and Operation \ref{hom operation} in conjunction with Algorithm \ref{algorithm1}, we ensure that $u$ and $v$ are colored by the same color, and $H$ is 3-colorable.
							
							When $u$ and $v$ are in different 4-faces (5-faces) or in the same 4-faces (5-faces), we can similarly denote $w$ as the 2-degree vertex in the marginal triangle, set $H^8=H-w$, by Theorem \ref{Hong1}, the graph $H^8$ is 3-colorable, thereby the graph $H$ is also 3-colorable.
							\begin{case}
								The graph $H$ contains two striped triangles.
							\end{case}
							When the vertices $u$ and $v$ are in two striped triangles, respectively.
							If there is a diamond structure in the graph, and the common neighbors of $u$ and $v$ are denoted as $u'$ and $v'$ (note that in this case, $u$ and $v$ can only be the diamond vertices), let $H^2=H-u'v'$. Then, the diamond structure in the graph $H$ becomes a 4-face in $H^2$. We color this 4-face with three colors such that $u'$ and $v'$ receive different colors. By Theorem \ref{Grum and Ak}, $H^2$ is 3-colorable, then $H$ is also 3-colorable; if there is no diamond structure in the graph as well as the other three situations, using Definition \ref{hom mapping} and Operation \ref{hom operation} in conjunction with Algorithm \ref{algorithm1}, we can ensure that $u$ and $v$ are colored by the same color, and $H$ is 3-colorable.
							\begin{case}
								The graph $H$ contains a striped and an internal triangle.
							\end{case}
							When the vertices $u$ and $v$ are located in a striped and an internal triangle, respectively. If there exists a diamond structure in the graph, and the common neighbors of $u$ and $v$ are denoted by $u'$ and $v'$ (it should be noted that in this case, $u$ and $v$ can only be the vertices of the diamond), let $H^2 = H - u'v'$. Subsequently, the diamond structure in the graph $H$ transforms into a 4-face in $H^2$. This 4-face is colored by the three colors such that $u'$ and $v'$ are assigned different colors. According to Theorem \ref{Grum and Ak}, $H^2$ is 3-colorable, then $H$ is also 3-colorable; in the cases where the graph does not contain a diamond structure or in the other three specific situations, by employing Definition \ref{hom mapping} and Operation \ref{hom operation} together with Algorithm \ref{algorithm1}, we can ensure that $u$ and $v$ are colored by the same color, and $H$ is 3-colorable.
							\begin{case}
								The graph $H$ contains two internal triangles.
							\end{case}
							When the vertices $u$ and $v$ are situated in two internal triangles, respectively. If there exists a diamond structure in the graph, and the mutual neighbors of $u$ and $v$ are denoted as $u'$ and $v'$ (it is important to mention that in this case, $u$ and $v$ can solely be the vertices of the diamond), let $H^2 = H - u'v'$. As the diamond configuration in the graph $H$ gets converted into a 4-face in $H^2$. The 4-face is colored by three colors, ensuring that $u'$ and $v'$ are assigned different colors. By Theorem \ref{Grum and Ak}, $H^2$ is 3-colorable and $H$ is also 3-colorable; in the cases where the graph lacks a diamond structure or in the other  three specific situations, by making use of Definition \ref{hom mapping} and Operation \ref{hom operation} along with Algorithm \ref{algorithm1}, we can guarantee that $u$ and $v$ are colored by the same way, and $H$ is 3-colorable.
						\end{proof}
						
						The combination of Theorems \ref{3.4} and \ref{3.5} resolves Theorem \ref{prove2}.
						
						 1-connected outerplane graphs is the subgraph of some 2-connected outerplane graphs by adding some edges from it. Thus, from the above argumentation process, the  Corollaries 1.7 and 1.8 hold.
						\section{Conclusion and Remarks}
						
						In this paper,  We have proposed the influence of diamond structures on the extendability problem in planar graphs, and combined with Gr\"{o}tzsch Theorem, we have characterized the structure of the 3-coloring extendability problem in outerplanar graphs through algorithm and Homomorphism mappings. The  {\bf Problems} 4.2, 4.3 and 4.4 at the end of the article~\cite{LaLuSt} are widely open.


						
					\end{document}